\numberwithin{equation}{section}
\theoremstyle{plain}
\newtheorem{theorem}[equation]{Theorem}
\newtheorem{proposition}[equation]{Proposition}
\newtheorem{lemma}[equation]{Lemma}
\newtheorem{corollary}[equation]{Corollary}
\newcommand{\Hom}{\operatorname{Hom}}
\theoremstyle{definition}
\newtheorem{definition}[equation]{Definition}
\newtheorem{example}[equation]{Example}
\theoremstyle{remark}
\newtheorem{remark}[equation]{Remark}
\newcommand{\xrto}{\xrightarrow}
\newcommand{\cok}{\mathrm{cok}}
\begin{document}
	
	\title[]{$\tau$-tilting modules over trivial extensions}
\thanks{MSC2020: 16G10, 16E10}
\thanks{Keywords: support $\tau$-tilting module, trivial extension, $\tau$-rigid module, triangular matrix ring}
\thanks{$*$ is the corresponding author. Both of the authors are supported by NSFC (Nos.11671174, 12171207). Zhang is also supported by the Project Funded by the Priority Academic Program Development of Jiangsu Higher Education Institutions and the Starting Project of Jiangsu Normal University.}
\begin{abstract} We study (support) $\tau$-tilting modules over the trivial extensions of finite dimensional algebras. More precisely, we construct two classes of (support)$\tau$-tilting modules in terms of the adjoint functors which extend and generalize the results on (support) $\tau$-tilting modules over triangular matrix rings given by Gao-Huang.
\end{abstract}

\author{Zhi-Wei Li}
\address{Z-W. Li: School of Mathematics and Statistics, Jiangsu Normal University,
Xuzhou, 221116, P. R. China}
\email{zhiweili@jsnu.edu.cn}
\author{Xiaojin Zhang$^*$}
\address{X. Zhang: School of Mathematics and Statistics, Jiangsu Normal University,
Xuzhou, 221116, P. R. China}
\email{xjzhang@jsnu.edu.cn; xjzhangmaths@163.com}
	\maketitle

	\setcounter{tocdepth}{1}
\section{Introduction}

In 2014, Adachi, Iyama and Reiten \cite{AIR} introduced the $\tau$-tilting theory as a generalization of the classical tilting theory in terms of mutations. It is shown that $\tau$-tilting theory are closed related to cluster tilting theory and silting theory. In $\tau$-tilting theory, support $\tau$-tilting modules are very essential. Therefore it is interesting to classify support $\tau$-tilting modules for given algebras. Many scholars focus on this topics. Mizuno \cite{Mi} classified support $\tau$-tilting modules over preprojective algebras; Adachi \cite{A1} classified support $\tau$-tilting modules over Nakayama algebras; Adachi \cite{A2} and Zhang \cite{Z} studied $\tau$-rigid modules over algebras of radical square zero; F. Eisele, G. Janssens and T. Raedschelders \cite{EJR} studied the $\tau$-rigid modules over special biserial algebras; Iyama and Zhang \cite{IZ} classified the support $\tau$-tilting modules over the Auslander algebra of $K[x]/(x^n)$; Zito \cite{Zi} studied the support $\tau$-tilting modules over cluster-tilted algebras. In particular, Gao and Huang \cite{GH} studied support $\tau$-tilting modules over lower triangular matrix rings; Peng, Ma and Huang \cite{PMH} also considered support $\tau$-tilting modules over lower triangular matrix rings. Moreover, Zhang \cite{Zh} studied the support $\tau$-tilting modules over lower triangular matrix rings and generalized the results of Gao and Huang in terms of recollement of abelian categories.

On the other hand, the trivial extension of algebras can be back to Fossum-Griffith-Reiten's trivial extension of abelian categories \cite{FGR}. Later it has gained attention of algebraists. Tachikawa \cite{T} classified the indecomposable representation of the trivial extension by $\mathbb{D}\Lambda$ over hereditary algebras of finite representation type. Later Yamagata \cite{Y} studied the trivial extension by $\mathbb{D}\Lambda$ over tilted algebras of Dynkin type. Moreover, Happel \cite{H} built a connection between the bounded derived category and the trivial extensions of algebras. Recently Assem, Brustle, Schiffler and Todrov \cite{ABST} showed that some special trivial extension of algebras are closed related to cluster categories. For applications of trivial extension of abelian categories in ring theory, we refer to \cite{AnF,M}.

In the present paper, we study the support $\tau$-tilting modules over trivial extensions of finite dimensional algebras in terms of adjoint pairs. We have the following main result (See subsection 2.3 for the definitions of the functors $T$ and $Z$).

\begin{theorem}\label{1} $(\mathrm{Theorem}\ \ref{main theorem}\ )$ Let $A$ be a finite dimensional algebra over a field $K$ and let $\Lambda$ be the trivial extension of $A$ by a  finitely generated $A$-bimodule $M$. For a pair $(X, P)\in A\mbox{-}\mathrm{mod}$, we have
	
	$(1)$ $(T(X),T(P))$ is a support $\tau$-tilting pair in $\Lambda\mbox{-}\mathrm{mod}$ if and only if $(X, P)$ is a support $\tau$-tilting pair in $A\mbox{-}\mathrm{mod}$, $\Hom_A(P,M\otimes_A X)=0$ and $\Hom_A(M\otimes_AX, \tau X)=0$.	
	
	$(2)$ $(Z(X), T(P))$ is a support $\tau$-tilting pair in $\Lambda\mbox{-}\mathrm{mod}$ if and only if $(X, P)$ is a support $\tau$-tilting pair $A\mbox{-}\mathrm{mod}$ and $\Hom_A(Q,X)=0$, where $Q$ is a projective cover of $M\otimes_AX$.
\end{theorem}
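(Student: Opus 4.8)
The plan is to verify directly that the two proposed pairs satisfy the three defining conditions of a support $\tau$-tilting pair over $\Lambda$: that the first component is $\tau_\Lambda$-rigid, that the second component is projective with no nonzero maps into the first, and that the total number of non-isomorphic indecomposable summands equals $|\Lambda|=|A|$. The two tools I would lean on are the adjunction $\Hom_\Lambda(T(N),-)\cong\Hom_A(N,U(-))$, where $U\colon\Lambda\text{-mod}\to A\text{-mod}$ is the forgetful functor and $T=\Lambda\otimes_A-$ its left adjoint, together with the functorial criterion for $\tau$-rigidity: for a minimal projective presentation $P_1\xrightarrow{f}P_0\to X\to 0$ in $A\text{-mod}$ and any module $N$, one has $\Hom_A(N,\tau X)=0$ if and only if $\Hom_A(f,N)\colon\Hom_A(P_0,N)\to\Hom_A(P_1,N)$ is surjective; in particular $X$ is $\tau_A$-rigid exactly when $\Hom_A(f,X)$ is surjective.

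For part (1), I would first note that $T(P)$ is projective and that $\Hom_\Lambda(T(P),T(X))\cong\Hom_A(P,U T(X))=\Hom_A(P,X)\oplus\Hom_A(P,M\otimes_A X)$, so the vanishing of this Hom space is equivalent to $\Hom_A(P,X)=0$ together with $\Hom_A(P,M\otimes_A X)=0$. Since $T$ sends a minimal projective presentation of $X$ to one of $T(X)$ (being exact on projectives and preserving tops, as $\mathrm{rad}\,\Lambda=\mathrm{rad}\,A\oplus M$), feeding the presentation $\Lambda\otimes_A P_1\to\Lambda\otimes_A P_0\to T(X)\to 0$ into the functorial criterion and applying the adjunction reduces the $\tau_\Lambda$-rigidity of $T(X)$ to the surjectivity of $\Hom_A(f,X\oplus M\otimes_A X)$. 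This splits into "$\Hom_A(f,X)$ surjective", i.e. $X$ is $\tau_A$-rigid, and "$\Hom_A(f,M\otimes_A X)$ surjective", i.e. $\Hom_A(M\otimes_A X,\tau X)=0$, matching the stated conditions exactly.

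For part (2) the same adjunction gives $\Hom_\Lambda(T(P),Z(X))\cong\Hom_A(P,U Z(X))=\Hom_A(P,X)$, which vanishes precisely when $\Hom_A(P,X)=0$. The delicate point is the $\tau_\Lambda$-rigidity of the zero-extension $Z(X)$ (the $A$-module $X$ with $M$ acting as zero). I would compute its minimal projective presentation over $\Lambda$: its projective cover is $\Lambda\otimes_A P_0$, and a top computation for the first syzygy shows that the next projective is $\Lambda\otimes_A(P_1\oplus Q)$, where $Q$ is exactly a projective cover of $M\otimes_A X$; the summand $Q$ appears through the $M$-action on the syzygy. The underlying $A$-map is $(f,0)\colon P_1\oplus Q\to P_0$, so the functorial criterion combined with the adjunction reduces $\tau_\Lambda$-rigidity of $Z(X)$ to surjectivity of $\Hom_A((f,0),X)$, equivalently $X$ being $\tau_A$-rigid and $\Hom_A(Q,X)=0$, as stated.

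It then remains to check the numerical condition in both parts. Here $|T(P)|=|P|$, and $|Z(X)|=|X|$ because the zero-extension functor is fully faithful; for $T(X)=\Lambda\otimes_A X$ I would show $|T(X)|=|X|$ by noting that the adjunction gives $\mathrm{End}_\Lambda(T(X))\cong\mathrm{End}_A(X)\oplus\Hom_A(X,M\otimes_A X)$, where the second summand is a two-sided ideal squaring to zero since $M\cdot M=0$ in $\Lambda$, hence lies in the radical, so $T(X)$ and $X$ have isomorphic semisimple endomorphism quotients and the same number of indecomposable summands. Together with $(X,P)$ being support $\tau$-tilting over $A$, i.e. $|X|+|P|=|A|=|\Lambda|$, this yields the count, and both equivalences follow by running the above computations in each direction. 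I expect the main obstacle to be the syzygy-and-top computation of part (2) that pins down $Q$ as a projective cover of $M\otimes_A X$, since it requires tracking the $M$-action on the first syzygy rather than a purely formal adjunction manipulation; by comparison the $\tau$-rigidity translation in part (1) and the endomorphism-ring count are routine.
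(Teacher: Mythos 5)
Your proposal is correct and follows essentially the same route as the paper: you reduce $\tau$-rigidity of $T(X)$ and $Z(X)$ to the surjectivity criterion of Adachi--Iyama--Reiten via the adjunctions, using the same minimal projective presentations (in particular the presentation of $Z(X)$ in which the projective cover $Q$ of $M\otimes_A X$ appears with underlying map $(f,0)$), and the same Hom-vanishing and summand-counting reductions. The only cosmetic difference is in the counting lemma: you prove $|T(X)|=|X|$ by exhibiting $\Hom_A(X,M\otimes_A X)$ as a square-zero ideal of $\mathrm{End}_\Lambda(T(X))$ with quotient $\mathrm{End}_A(X)$, whereas the paper deduces it from $CT=\mathrm{Id}$ (and $UZ=\mathrm{Id}$ for $Z$), identifying $\mathrm{add}\,X$ with $\mathrm{add}\,T(X)$.
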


As a result of Theorem \ref{1}, we can generalize and extend the results on support $\tau$-tilting modules over lower triangular matrix rings by Gao-Huang \cite[Theorem 4.3]{GH}. Moreover, we have the following results on $\tau$-rigid modules over trivial extensions and lower triangular matrix algebras.

\begin{theorem}\label{2}$( \mathrm {Proposition}\  \ref{prop:necessary condition}, \mathrm{Corollary}\  \ref{cor:necessity condition for lower triangular ring}\ )$ $(1)$ Let $\Lambda$ be the trivial extension of a finite dimensional algebra $A$ by a finitely generated bimodule $M$. If $M\otimes_RX\xrto{\alpha}X$ is a $\tau$-rigid module in $\Lambda\mbox{-}\mathrm{mod}$, then both $X$ and $\cok \alpha$ are $\tau$-rigid modules in $A\mbox{-}\mathrm{mod}$.

$(2)$ Let $R$ and $S$ be finite dimensional algebras and $_SM_R$ a finitely generated bimodule. Let $\Lambda$ be the lower triangular matrix algebra
	$\left(\begin{smallmatrix}
		R&0\\
		M&S
	\end{smallmatrix}\right)$. If $(0, M\otimes_RX)\xrto{(0,\alpha)}(X,Y)$ is a $\tau$-rigid module in $\Lambda\mbox{-}\mathrm{mod}$, then $X$ is a $\tau$-rigid module in $R\mbox{-}\mathrm{mod}$ and $\cok \alpha$ is a $\tau$-rigid module in $S\mbox{-}\mathrm{mod}$.
\end{theorem}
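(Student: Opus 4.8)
The plan is to prove both assertions by transporting the Auslander--Iyama--Reiten presentation criterion for $\tau$-rigidity across the adjunctions attached to the trivial extension. I identify a $\Lambda$-module with a pair $(X,\alpha)$, where $X=\operatorname{res}(X,\alpha)$ is the underlying $A$-module (restriction along $A\hookrightarrow\Lambda$) and $\alpha\colon M\otimes_AX\to X$ satisfies $\alpha(1_M\otimes\alpha)=0$. I will use two functors $A\text{-mod}\rightleftarrows\Lambda\text{-mod}$: the inflation $e_\ast\colon V\mapsto(V,0)$ together with its left adjoint $e^\ast=A\otimes_\Lambda-$, which sends $(X,\alpha)$ to $\cok\alpha$ and carries $\Lambda$-projectives to $A$-projectives; and the induction $T=\Lambda\otimes_A-$, left adjoint to $\operatorname{res}$. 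The criterion I rely on is: for $V$ with a projective presentation $P_1\xrightarrow{f}P_0\to V\to0$ and any $W$, one has $\Hom(W,\tau V)=0$ iff $\Hom(f,W)\colon\Hom(P_0,W)\to\Hom(P_1,W)$ is surjective; taking $W=V$ this characterizes $\tau$-rigidity of $V$.

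For the cokernel statement I argue as follows. Fix a $\Lambda$-projective presentation $\mathbb P_1\xrightarrow{F}\mathbb P_0\to(X,\alpha)\to0$. Since $e^\ast$ is right exact and preserves projectives, applying it yields an $A$-projective presentation $e^\ast\mathbb P_1\xrightarrow{e^\ast F}e^\ast\mathbb P_0\to\cok\alpha\to0$. The unit $\eta\colon(X,\alpha)\to e_\ast e^\ast(X,\alpha)=(\cok\alpha,0)$ is surjective, so for the projective modules $\mathbb P_i$ the maps $\Hom_\Lambda(\mathbb P_i,\eta)$ are surjective; moreover the adjunction $e^\ast\dashv e_\ast$ identifies $\Hom_A(e^\ast\mathbb P_i,\cok\alpha)$ with $\Hom_\Lambda(\mathbb P_i,(\cok\alpha,0))$ naturally in $i$. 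In the resulting commutative square whose top row is $\Hom_\Lambda(F,(X,\alpha))$ and bottom row is $\Hom_A(e^\ast F,\cok\alpha)$, the top row is surjective because $(X,\alpha)$ is $\tau_\Lambda$-rigid and both vertical maps are surjective; hence the bottom row is surjective, which by the criterion says $\cok\alpha$ is $\tau_A$-rigid.

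For the underlying-module statement I first reduce it to a $\Lambda$-theoretic vanishing. Choosing an $A$-projective presentation $Q_1\xrightarrow{g}Q_0\to X\to0$ and applying the right-exact induction $T$ gives a $\Lambda$-projective presentation of $T(X)=\Lambda\otimes_AX$; the adjunction $T\dashv\operatorname{res}$ turns the surjectivity of $\Hom_A(g,X)$ into that of $\Hom_\Lambda(T(g),(X,\alpha))$, so by the criterion $X$ is $\tau_A$-rigid if and only if $\Hom_\Lambda((X,\alpha),\tau_\Lambda T(X))=0$. The remaining task is to deduce this from $\Hom_\Lambda((X,\alpha),\tau_\Lambda(X,\alpha))=0$. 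Here the natural tool is the counit $\epsilon\colon T(X)=T(\operatorname{res}(X,\alpha))\to(X,\alpha)$, which is a surjection that is split over $A$ and has kernel isomorphic to $M\otimes_AX$ equipped with the structure map $-(1_M\otimes\alpha)$; the identity $\alpha(1_M\otimes\alpha)=0$ is exactly what makes this kernel a $\Lambda$-submodule. I expect the main obstacle to lie precisely here: since $\operatorname{res}$ does not preserve projectivity, the $\tau_\Lambda$-rigidity of $(X,\alpha)$ does not transfer formally to $T(X)$, and one must compare the projective presentations of $(X,\alpha)$ and of $T(X)$ through $\epsilon$, exploiting the square-zero structure of $M$, in order to push the surjectivity in the criterion from $(X,\alpha)$ onto the presentation of $T(X)$.

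Finally, part (2) follows by specialization. The lower triangular algebra $\left(\begin{smallmatrix}R&0\\M&S\end{smallmatrix}\right)$ is the trivial extension of $A=R\times S$ by the $(R\times S)$-bimodule ${}_SM_R$, and the $\Lambda$-module $(0,M\otimes_RX)\xrightarrow{(0,\alpha)}(X,Y)$ is exactly the pair $\bigl((X,Y),\beta\bigr)$ with $\beta=(0,\alpha)$ and $M\otimes_A(X,Y)=M\otimes_RX$ concentrated in the $S$-component, whence $\cok\beta=(X,\cok\alpha)$. Applying the cokernel conclusion of part (1), $\cok\beta$ is $\tau_{R\times S}$-rigid; since $\tau$ over a product algebra is computed componentwise and $\Hom$ splits along the product, this is equivalent to $X$ being $\tau_R$-rigid together with $\cok\alpha$ being $\tau_S$-rigid, giving both assertions of (2).
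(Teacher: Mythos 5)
Your treatment of the cokernel statement in (1), and your derivation of (2) from it, follow essentially the same route as the paper: Proposition \ref{prop:necessary condition} is your commutative-square argument written concretely, using the Fossum--Griffith--Reiten form $T(P_1)\to T(P_0)\to(X,\alpha)\to 0$ of the minimal presentation and the identification $(*)$ of $\Hom_\Lambda(T(P),(X,\alpha))$ in place of your unit-plus-adjunction comparison, and Corollary \ref{cor:necessity condition for lower triangular ring} is deduced exactly as you do it, since over $(R\times S)\ltimes M$ one has $\cok(0,\alpha)=(X,\cok\alpha)$, so both claims of (2) come out of the cokernel statement alone. One repair is needed in your square: you state the Adachi--Iyama--Reiten criterion as an equivalence for an \emph{arbitrary} projective presentation, but it is an equivalence only for a \emph{minimal} one. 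The surjectivity of your top row requires $\mathbb{P}_1\to\mathbb{P}_0\to(X,\alpha)\to 0$ to be minimal (a presentation with a superfluous degree-one summand $Q'\to 0$ has non-surjective $\Hom(F,(X,\alpha))$ whenever $\Hom_\Lambda(Q',(X,\alpha))\neq 0$, even for projective $(X,\alpha)$). Conversely, your bottom row $C(\mathbb{P}_1)\to C(\mathbb{P}_0)\to\cok\alpha\to 0$ is in general \emph{not} minimal, so to conclude $\tau$-rigidity you must use the one-directional fact that surjectivity against some presentation implies surjectivity against the minimal one, the latter being a direct summand of any presentation. This is precisely the point the paper handles by writing $x=(r,0)\colon P_1'\oplus P_1''\to P_0$ with $r$ right minimal and testing only against $r$.

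The half of (1) that you could not finish---$\tau$-rigidity of the underlying module $X$---cannot be finished: the claim is false, and your suspicion about $\operatorname{res}$ not preserving projectivity is exactly where it breaks. Your reduction is sound: $X$ is $\tau$-rigid over $A$ if and only if $\Hom_\Lambda((X,\alpha),\tau_\Lambda T(X))=0$ (this uses Lemma \ref{lem:minimal proj presentation} so that $T$ of a minimal presentation is minimal); but this vanishing simply does not follow from $\Hom_\Lambda((X,\alpha),\tau_\Lambda(X,\alpha))=0$. Concretely, in the paper's own Example \ref{example of trivial extensions}, take $A=K(1\to 2)$, $M=\mathbb{D}A$, $\Lambda=A\ltimes \mathbb{D}A$, and $(X,\alpha)={}_\Lambda\Lambda$, which is projective and hence $\tau$-rigid. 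Its underlying $A$-module is $A\oplus\mathbb{D}A\cong P(1)^{2}\oplus P(2)\oplus S(1)$, and since $\tau_A S(1)=S(2)=P(2)$ we get $\Hom_A(A\oplus\mathbb{D}A,\tau_A(A\oplus\mathbb{D}A))\supseteq\Hom_A(P(2),S(2))\neq 0$, so $X$ is not $\tau$-rigid over $A$. Note that the body of the paper never claims this statement: Proposition \ref{prop:necessary condition} asserts only that $\cok\alpha$ is $\tau$-rigid, and the assertion about $X$ in Theorem \ref{2}(1) of the introduction is an overstatement refuted by this example. Part (2) survives for exactly the reason you give: there $X$ is a direct summand of $\cok(0,\alpha)=(X,\cok\alpha)$, $\tau$-rigidity passes to direct summands, and over the product $R\times S$ both $\tau$ and $\Hom$ decompose componentwise; so your proof of (2), resting solely on the cokernel statement, is complete and correct.
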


Now we state the organization of the paper as follows:

In Section 2, we recall the basic results on trivial extensions of rings. In Section 3, we study the $\tau$-rigid modules and support $\tau$-tilting modules over trivial extensions of algebras and prove Theorems \ref{1} and \ref{2}. In Section 4, we give examples to illustrate our main results.

Throughout this paper, $\tau$ is the Auslander-Reiten translation functor. For an algebra $\Lambda$, we use $\Lambda\mbox{-}\mathrm{mod}$ to denote the category of finitely generated left $\Lambda$-modules.

{\bf Acknowledgement} The authors thank Xiao-Wu Chen and Zhibing Zhao for useful discussion on computing the representations of trivial extension of algebras. The second author would like to thank Yingying Zhang for useful comments on support $\tau$-tilting modules over lower triangular matrix algebras. The authors also want to thank the anonymous referee for showing them a key lemma and other useful suggestions to improve the paper.
	
\section{The categories of modules over trivial extensions of rings}

In this section, we recall some basic results on modules over trivial extensions of rings.

\subsection{Trivial extensions of rings} Let $R$ be a ring with $1$ and $M$ an $R$-bimodule. Denote by $R\ltimes M$, called the {\it trivial extension} of $R$ by $M$, to be the ring whose additive group is the direct sum $R\times M$ with multiplication given by $(r,m)(r',m')=(rr', rm'+mr')$.

It is well-known that a left $R\ltimes M$-module is a morphism $M\otimes_R X\xrto{\alpha}X$ such that the composition $M\otimes_RM\otimes_RX\xrto{M\otimes_R\alpha}M\otimes_RX\xrto{\alpha}X$ is zero.  If $M\otimes_RX\xrto{\alpha} X$ and $M\otimes_RY\xrto{\beta} Y$ are two left $R\ltimes M$-modules, then a morphism $f\colon \alpha\to \beta$ is a morphism $f\colon X\to Y$ such that the diagram

$$\CD
  M\otimes_RX @>{\alpha}>> X \\
  @V {I_M\otimes f} VV @V f VV  \\
  M\otimes_RY @>\beta>> Y
\endCD$$
is commutative. The composition of morphisms coincides with the composition in the category of left $R$-modules.

In the sequel, sometimes we use a pair $(X,\alpha)$ to denote the left $R\ltimes M$-module $M\otimes_RX\xrto{\alpha}X$.
\subsection{The lower triangular matrix rings} Let $R$ and $S$ be rings and let $_{S}M_R$ be a bimodule. Then the lower triangular matrix ring $\left(\begin{smallmatrix}
	R&0\\
	M&S
\end{smallmatrix}\right)$ is the ring with addition $\left(\begin{smallmatrix}
r&0\\
m&s
\end{smallmatrix}\right)+\left(\begin{smallmatrix}
r'&0\\
m'&s'
\end{smallmatrix}\right)=\left(\begin{smallmatrix}
r+r'&0\\
m+m'&s+s'
\end{smallmatrix}\right)$ and product $\left(\begin{smallmatrix}
r&0\\
m&s
\end{smallmatrix}\right)\left(\begin{smallmatrix}
r'&0\\
m'&s'
\end{smallmatrix}\right)=\left(\begin{smallmatrix}
rr'&0\\
mr'+sm'&ss'
\end{smallmatrix}\right).$

	Recall that the lower triangular matrix ring $\left(\begin{smallmatrix}
	R&0\\
	M&S
\end{smallmatrix}\right)$ is isomorphic to the trivial extension $(R\times S)\ltimes M$, where the right $R\times S$-module structure of $M$ is given via $R\times S\to R$ and the left $R\times S$-module structure of $M$ is given via $R\times S\to S$. A left $R\times S$-module is an order pair $(X,Y)$ with $X\in R\mbox{-}\mathrm{Mod}$ and $Y\in S\mbox{-}\mathrm{Mod}$. In fact, if $L$ is a left $R\times S$-module, then $X=(1,0)L$ is a left $R$-module and $Y=(0,1)L$ is a left $S$-module. Conversely, given a pair $(X,Y)$ in $(R\mbox{-}\mbox{Mod}, S\mbox{-}\mathrm{Mod})$, define $(r,s)(x,y)=(rx,sy)$, then $(X,Y)$ obtains a structure of left $R\times S$-module. Therefore, a left $\left(\begin{smallmatrix}
R&0\\
M&S
\end{smallmatrix}\right)$-module is of the form $M\otimes_{R\times S}(X,Y)=(0,M\otimes_RX)\xrto{(0,\alpha)} (X,Y).$
Hence, a left $\left(\begin{smallmatrix}
	R&0\\
	M&S
\end{smallmatrix}\right)$-module is determined uniquely by a morphism $M\otimes_RX\xrto{\alpha} Y$ of left $S$-modules.

\subsection{Adjoint pairs} For simplicity, we use $F$ to denote the tensor functor $M\otimes_R-$. There are pairs of adjoint pairs $(T,U)$ and $(C,Z)$, see \cite[Proposition 1.3]{FGR}.
\[\begin{tikzpicture}
	\node at(-3,0){$R\mbox{-}\mathrm{Mod}$};
	\node at(0,0){$R\ltimes M\mbox{-}\mathrm{Mod}$};
	\node at(3,0){$R\mbox{-}\mathrm{Mod}$};
	\node at(-1.6,.5){$T$};
	\node at(-1.6,-.5){$U$};
	\node at (1.7,.5){$C$};
	\node at(1.7,-.5){$Z$};
	\draw[->](-2.2,.2)--(-1.2,.2);
	\draw[->](-1.2,-.2)--(-2.2,-.2);
	\draw[->](1.2,.2)--(2.2,.2);
	\draw[->](2.2,-.2)--(1.2,-.2);
\end{tikzpicture}\]
The functor $T$ is defined on objects by
$$T(N)=\left(\begin{smallmatrix}
	0&0\\
	1&0
\end{smallmatrix}\right)\colon F(N)\oplus F^2(N)\to N\oplus F(N)$$
and on morphisms by
$$T(f)=\left(\begin{smallmatrix}
	f&0\\
	0&F(f)
\end{smallmatrix}\right), f:N\rightarrow L.$$
Thus, given a left $R\ltimes M$-module $(X,\alpha)$, it is easy to get that
$$\Hom_{R\ltimes M}(T(P),(X,\alpha))=\{(f,\alpha F(f)) \ | \ f\in\Hom_R(P,X)\}  \eqno(*)$$
for any left $R$-module $P$.

The functor $U$ is defined by $U(F(N)\xrto{\alpha}N)=N$ and $U(f)=f$. The functor $Z$ is given by $Z(N)=F(N)\xrto{0}N$ and $Z(f)=f$. The functor $C$ is defined by
$C(F(N)\xrto{\alpha}N)=\cok\alpha$ and $C(f)$ is the induced morphism.

Recall that an epimorphism $f\colon N\to L$ in an abelian category is called {\it minimal} if for any morphism $g\colon N'\to N$, $fg$ is surjective implies $g$ is an epimorphism.

\begin{lemma} \label{lem: adjoint} \cite[Proposition 1.5]{FGR} The functors $T$ and $C$ are right exact and $U$ and $Z$ are exact. Moreover
	\begin{itemize}
		\item $CT=\mathrm{Id}$;
		
		\item $UZ=\mathrm{Id}$;
		
		\item 	the unit $\eta\colon \mathrm{Id}\to ZC$ is a minimal epimorphism.		
	\end{itemize}
	\end{lemma}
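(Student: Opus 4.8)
The plan is to dispatch all the statements except the last one by direct inspection, and to concentrate the real work on the minimality of the unit. First I would record that exactness of a sequence in $R\ltimes M\text{-}\mathrm{Mod}$ is tested on underlying $R$-modules. The functor $U$ is restriction of scalars along the ring map $R\to R\ltimes M$, $r\mapsto(r,0)$, so it is exact; and $Z$ sends $N$ to the pair $(F(N)\xrightarrow{0}N)$ with vanishing structure map, so a short exact sequence of $R$-modules is carried to the same sequence on underlying modules, whence $Z$ is exact as well. For $T$ and $C$ I would simply invoke the adjunctions: in $(T,U)$ and $(C,Z)$ the functors $T$ and $C$ are the left adjoints, hence preserve cokernels and are right exact (alternatively, $C$ is literally the cokernel functor $(X,\alpha)\mapsto\cok\alpha$). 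The two identities are then immediate: the structure map of $T(N)$ is $\left(\begin{smallmatrix}0&0\\1&0\end{smallmatrix}\right)\colon F(N)\oplus F^2(N)\to N\oplus F(N)$, whose image is $0\oplus F(N)$, so $CT(N)\cong (N\oplus F(N))/(0\oplus F(N))\cong N$ naturally; and $UZ(N)=U(F(N)\xrightarrow{0}N)=N$.

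It remains to show the unit $\eta\colon\mathrm{Id}\to ZC$ is a minimal epimorphism, and here I would first pin down $\eta$ explicitly. For a module $(X,\alpha)$ one has $ZC(X,\alpha)=(F(\cok\alpha)\xrightarrow{0}\cok\alpha)$, and the component $\eta_{(X,\alpha)}$ is, on underlying $R$-modules, the canonical projection $\pi\colon X\to\cok\alpha=X/\operatorname{im}\alpha$. This is a morphism of $R\ltimes M$-modules because $\pi\circ\alpha=0$, and it is the one corresponding to $\mathrm{id}_{\cok\alpha}$ under the adjunction $(C,Z)$; in particular it is an epimorphism. To verify minimality directly from the stated definition, I would take an arbitrary morphism $g\colon(Y,\beta)\to(X,\alpha)$ with underlying $R$-map $g_0$ and assume $\eta_{(X,\alpha)}\circ g$ is surjective. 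Since surjectivity is detected on underlying modules and the underlying map of $\eta_{(X,\alpha)}$ is $\pi$, this means $\pi g_0$ is surjective, i.e. $\operatorname{im} g_0+\operatorname{im}\alpha=X$. Writing $I=\operatorname{im} g_0$, the goal becomes to show $\operatorname{im}\alpha\subseteq I$, for then $I=X$ and $g$ is epi.

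The crux is then to exploit the two structural facts available: the module relation $\alpha\circ F(\alpha)=0$ and the right exactness of $F=M\otimes_R-$. Applying $F$ to the surjection $I\oplus\operatorname{im}\alpha\twoheadrightarrow X$ shows $F(X)$ is the sum of the images of $F(I)\to F(X)$ and $F(\operatorname{im}\alpha)\to F(X)$, so $\operatorname{im}\alpha=\alpha(F(X))$ splits as a sum of $\alpha$ applied to these two images. For the first, right exactness gives $\operatorname{im}(F(I)\to F(X))=\operatorname{im}\bigl(F(g_0)\colon F(Y)\to F(X)\bigr)$, and using $\alpha F(g_0)=g_0\beta$ I get $\alpha(\operatorname{im} F(g_0))=g_0\beta(F(Y))\subseteq g_0(Y)=I$. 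For the second, right exactness identifies $\operatorname{im}(F(\operatorname{im}\alpha)\to F(X))$ with $\operatorname{im}\bigl(F(\alpha)\colon F^2(X)\to F(X)\bigr)$, and the relation $\alpha\circ F(\alpha)=0$ forces $\alpha$ to kill it. Hence $\operatorname{im}\alpha\subseteq I+0=I$, giving $I=X$ as desired. The one genuinely delicate point — and the step I expect to be the main obstacle — is precisely the control of $F(\operatorname{im}\alpha)$ inside $F(X)$: because $F$ is merely right exact, $F$ of the inclusion $\operatorname{im}\alpha\hookrightarrow X$ need not be injective, so one cannot treat $F(\operatorname{im}\alpha)$ as a submodule of $F(X)$ but must argue on the level of the image of $F(\alpha)$ and apply $\alpha\circ F(\alpha)=0$ there. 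Once this bookkeeping is carried out correctly, the minimality of $\eta$ follows.
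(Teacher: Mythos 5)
Your proposal is correct, but note that the paper itself offers no proof of this lemma at all: it is quoted verbatim as Proposition 1.5 of Fossum--Griffith--Reiten \cite{FGR}, so you are supplying an argument where the authors rely on a citation. Your reconstruction is sound on every point. The formal parts ($T$, $C$ right exact as left adjoints, $U$, $Z$ exact because (co)kernels in $R\ltimes M\mbox{-}\mathrm{Mod}$ are computed on underlying $R$-modules, $CT=\mathrm{Id}$ by computing the cokernel of $\left(\begin{smallmatrix}0&0\\1&0\end{smallmatrix}\right)$, and $UZ=\mathrm{Id}$ by inspection) are all routine and handled correctly. The substantive content is the minimality of $\eta_{(X,\alpha)}=\pi\colon X\to\cok\alpha$, and your argument there is exactly the right one: from $\operatorname{im}g_0+\operatorname{im}\alpha=X$ you must show $\operatorname{im}\alpha\subseteq\operatorname{im}g_0$, and the two ingredients are (i) $\alpha\bigl(\operatorname{im}F(g_0)\bigr)=\operatorname{im}(g_0\beta)\subseteq\operatorname{im}g_0$ from the morphism condition $\alpha F(g_0)=g_0\beta$, and (ii) $\alpha\bigl(\operatorname{im}F(\alpha)\bigr)=0$ from the defining relation $\alpha F(\alpha)=0$ of an $R\ltimes M$-module. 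You also correctly identify and defuse the only genuine trap: since $F=M\otimes_R-$ is only right exact, $F(\operatorname{im}\alpha)\to F(X)$ need not be injective, so one must compare images via the surjections $F(Y)\twoheadrightarrow F(\operatorname{im}g_0)$ and $F^2(X)\twoheadrightarrow F(\operatorname{im}\alpha)$ rather than treat $F$ of a submodule as a submodule; your bookkeeping $\operatorname{im}\bigl(F(\operatorname{im}\alpha)\to F(X)\bigr)=\operatorname{im}F(\alpha)$ is exactly what makes the argument go through. In short, your proof is a complete, self-contained substitute for the external reference, and it buys the paper independence from \cite{FGR} at the cost of about a page of elementary diagram-and-image computations.
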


\begin{remark}
	Let $R$ be a finite dimensional algebra and $M$ a finitely generated $R$-bimodule. For any $N\in R\mbox{-}\mathrm{mod}$, the morphism $(1,0)\colon T(N)\to Z(N)$ is a minimal epimorphism. So the projective covers of $T(N)$ and $Z(N)$ coincide.
\end{remark}

\subsection{The projective modules of trivial extensions of rings} By \cite[Corollary 1.6]{FGR}, a left $R\ltimes M$-module $M\otimes_RX\xrto{p} X$ is projective if and only if $C(p)$ is a projective $R$-module and $p\cong T(C(p))$.

Consider the exact sequence
$$F(X)\xrto{\alpha}X\xrto{\pi}C(\alpha)\to 0.$$ Since $P$ is projective, there is a morphism $q\colon P\to X$ such that $\pi q=p$. Let $\omega$ denote the morphism $(q,\alpha F(q))\colon P\oplus F(P)\to X$. Since $\alpha F(\alpha)=0$, we have that $(q,\alpha F(q))\left(\begin{smallmatrix}
	0&0\\
	1&0
\end{smallmatrix}\right)=\alpha(F(q),F(\alpha)F^2(q))$. Hence $\omega$ is a morphism from $T(P)$ to $(X,\alpha)$ in $R\ltimes M\mbox{-}\mathrm{Mod}$.
Now we recall the following lemmas on the projective presentation of a finitely generated module over trivial extensions.

\begin{lemma}\cite[Corollary 1.7(1)]{FGR}\label{projective cover}  Let $R$ be a finite dimensional algebra and $M$ a finitely generated $R$-bimodule. Let $(X,\alpha)$ be a module in $R\ltimes M\mbox{-}\mathrm{mod}$. If $P$ is the projective cover of $C(\alpha)$, then $T(P)$ is the projective cover of $(X,\alpha)$.
\end{lemma}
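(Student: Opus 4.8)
The plan is to show that the canonical morphism $\omega=(q,\alpha F(q))\colon T(P)\to (X,\alpha)$ built just before the statement is a projective cover. First I would record that $T(P)$ is projective: by Lemma~\ref{lem: adjoint} we have $CT(P)=P$, which is projective by hypothesis, and $T(P)=T(C(T(P)))$ trivially holds, so the characterization of projective $R\ltimes M$-modules recalled above applies. It then remains to prove that $\omega$ is an epimorphism with superfluous kernel, i.e. that it is a projective cover in $R\ltimes M\mbox{-}\mathrm{mod}$.

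The key observation is that for any $(Y,\beta)\in R\ltimes M\mbox{-}\mathrm{mod}$ the top satisfies $\operatorname{top}(Y,\beta)\cong \operatorname{top}_R C(\beta)$. Indeed $M$ is a nilpotent ideal of $R\ltimes M$, so $\operatorname{rad}(R\ltimes M)=\operatorname{rad}(R)\ltimes M$; since $M$ acts on $(Y,\beta)$ through $\beta$, the underlying module of $\operatorname{rad}(Y,\beta)$ is $\operatorname{rad}(R)Y+\operatorname{im}\beta$, and passing to the quotient gives $Y/(\operatorname{rad}(R)Y+\operatorname{im}\beta)=\operatorname{top}_R(Y/\operatorname{im}\beta)=\operatorname{top}_R C(\beta)$. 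Applying this to $T(P)$ and to $(X,\alpha)$ yields $\operatorname{top}(T(P))\cong \operatorname{top}_R(P)$ and $\operatorname{top}(X,\alpha)\cong \operatorname{top}_R C(\alpha)$.

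Next I would compute $C(\omega)$. Since $C$ sends $(X,\alpha)$ to $\cok\alpha=C(\alpha)$ and $T(P)$ to $P$, a direct check using $\pi q=p$ shows that $C(\omega)=p$, the covering map of the projective cover $P$ of $C(\alpha)$. Under the identifications of the previous paragraph, the morphism $\operatorname{top}(\omega)$ is precisely $\operatorname{top}_R(p)$, which is an isomorphism because $p$ is a projective cover in $R\mbox{-}\mathrm{mod}$. In particular $\operatorname{top}(\omega)$ is surjective, so Nakayama's lemma (valid since $R\ltimes M$ is finite dimensional and $(X,\alpha)$ is finitely generated) forces $\omega$ to be an epimorphism; together with $\operatorname{top}(\omega)$ being injective this gives $\ker\omega\subseteq\operatorname{rad}(T(P))$, and since $T(P)$ is projective we conclude that $\omega$ is a projective cover of $(X,\alpha)$.

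The projectivity of $T(P)$ and the identity $C(\omega)=p$ are routine; the main obstacle is the radical bookkeeping, namely the identification $\operatorname{top}(Y,\beta)\cong\operatorname{top}_R C(\beta)$, since this is exactly what converts the minimality of $p$ over $R$ into the superfluousness of $\ker\omega$ over $R\ltimes M$. As an alternative to the Nakayama step for surjectivity, one can instead show $\cok\omega=0$ directly: right exactness of $C$ together with $C(\omega)=p$ being an epimorphism gives $C(\cok\omega)=0$, and a short nilpotency argument, namely that any module $(N,\gamma)$ with $\gamma$ surjective and $\gamma F(\gamma)=0$ must have $N=0$ (as $F(\gamma)$ is then surjective, so $N=\gamma(F(N))=\gamma F(\gamma)(F^2 N)=0$), forces $\cok\omega=0$.
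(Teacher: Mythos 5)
Your proof is correct. Note first that the paper itself does not prove this lemma: it is quoted from \cite[Corollary 1.7(1)]{FGR}, and the text preceding the statement only constructs the candidate covering morphism $\omega=(q,\alpha F(q))\colon T(P)\to (X,\alpha)$ and checks it is a morphism of $R\ltimes M$-modules. So your argument is a self-contained replacement for that citation, and it holds up. The projectivity of $T(P)$ via $CT=\mathrm{Id}$ and the characterization of projectives recalled in Section 2 is fine. The real content is the radical bookkeeping, and it is right: $0\ltimes M$ is a square-zero ideal with $(R\ltimes M)/(0\ltimes M)\cong R$, so $\operatorname{rad}(R\ltimes M)=\operatorname{rad}(R)\ltimes M$, whence $\operatorname{rad}(Y,\beta)=\operatorname{rad}(R)Y+\operatorname{im}\beta$ and $\operatorname{top}(Y,\beta)\cong\operatorname{top}_R C(\beta)$ naturally in $(Y,\beta)$. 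Combined with $C(\omega)=p$, this makes $\operatorname{top}(\omega)=\operatorname{top}_R(p)$ an isomorphism, and your two deductions (surjectivity of $\omega$ by Nakayama, and $\ker\omega\subseteq\operatorname{rad}T(P)$ giving minimality) are standard and correctly executed for a finite dimensional algebra; the alternative surjectivity argument, that $C(\cok\omega)=0$ forces $\cok\omega=0$ because a surjective structure map $\gamma$ with $\gamma F(\gamma)=0$ kills the module, is also valid.

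For comparison, the tools the paper does state would give a shorter, more categorical derivation. By Lemma \ref{lem: adjoint} the unit $\eta\colon (X,\alpha)\to ZC(\alpha)$ is a minimal epimorphism, and by the Remark following it $(1,0)\colon T(P)\to Z(P)$ is one as well; moreover $Z(p)\colon Z(P)\to ZC(\alpha)$ is a minimal epimorphism (it is epi since $Z$ is exact, and minimal because applying the exact functor $U$ with $UZ=\mathrm{Id}$ reduces minimality to that of the projective cover $p$ in $R\mbox{-}\mathrm{mod}$). Since minimal epimorphisms compose, $T(P)\to ZC(\alpha)$ exhibits the projective $T(P)$ as the projective cover of $ZC(\alpha)$, and since $\eta$ is a minimal epimorphism, $(X,\alpha)$ and $ZC(\alpha)$ have the same projective cover. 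Your route trades these reductions for an explicit computation of $\operatorname{rad}(R\ltimes M)$ and of $\operatorname{top}(Y,\beta)$: it is more elementary, avoids invoking the unproved minimality of $\eta$, and makes explicit the structural fact (the description of $\operatorname{rad}(Y,\beta)$) that underlies both the paper's Remark and the quoted result in \cite{FGR}.
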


\begin{lemma} \label{lem:minimal proj presentation}  \cite[Proposition 2.8]{M} Let $R$ be a finite dimensional algebra with $X\in R\mbox{-}\mathrm{mod}$ and $M$ a finitely generated $R$-bimodule. If $Q\xrto{\xi}P\xrto{\zeta}X\to 0$ is a minimal projective presentation of $X$, then $T(Q)\xrto{T(\xi)}T(P)\xrto{T(\zeta)}T(X)\to 0$ is a minimal projective presentation of $T(X)$ in $R\ltimes M\mbox{-}\mathrm{mod}$.
\end{lemma}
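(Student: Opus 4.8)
The plan is to check, for the sequence $T(Q)\xrto{T(\xi)}T(P)\xrto{T(\zeta)}T(X)\to 0$, the three properties defining a minimal projective presentation: exactness, projectivity of the two left-hand terms, and that both $T(\zeta)$ and $T(\xi)$ are projective covers (of $T(X)$ and of $\ker T(\zeta)$, respectively).

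Exactness and projectivity are immediate from the functorial facts already recorded. Since $T$ is right exact by Lemma~\ref{lem: adjoint}, applying it to the exact sequence $Q\xrto{\xi}P\xrto{\zeta}X\to 0$ yields the exact sequence $T(Q)\xrto{T(\xi)}T(P)\xrto{T(\zeta)}T(X)\to 0$. For projectivity, the characterization of projective modules recalled above says $(Y,p)$ is projective iff $C(p)$ is a projective $R$-module and $p\cong T(C(p))$; applying it to $T(P)$ we have $C(T(P))=CT(P)=P$ projective and $T(P)\cong T(C(T(P)))$ tautologically, so $T(P)$ is projective, and the same argument handles $T(Q)$.

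Minimality of $T(\zeta)$ follows from Lemma~\ref{projective cover}: as $C(T(X))=CT(X)=X$ and $P$ is the projective cover of $X$ by hypothesis, $T(P)$ is the projective cover of $T(X)$. A surjection whose source is isomorphic to the projective cover of its target is itself a projective cover -- any projective cover of $T(X)$ lifts through $T(\zeta)$ since $T(P)$ is projective, and comparing lengths over $R\ltimes M$ forces the lift to be an isomorphism -- so $T(\zeta)$ is a projective cover.

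The crux, and the step I expect to be the genuine obstacle, is to identify $\ker T(\zeta)$ well enough to invoke Lemma~\ref{projective cover} a second time; here the non-exactness of $F=M\otimes_R-$ must be dealt with. I would compute $C(\ker T(\zeta))$ by hand. With $\iota\colon\ker\zeta\hookrightarrow P$, the map $T(\zeta)=\left(\begin{smallmatrix}\zeta&0\\0&F(\zeta)\end{smallmatrix}\right)$ has kernel with underlying $R$-module $\ker\zeta\oplus\ker F(\zeta)$, and its $R\ltimes M$-structure is the restriction of the map $\left(\begin{smallmatrix}0&0\\1&0\end{smallmatrix}\right)$ of $T(P)$, whose image is $0\oplus\mathrm{im}(F(\iota))$. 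Applying the right exact functor $F$ to $0\to\ker\zeta\xrto{\iota}P\xrto{\zeta}X\to 0$ and using $\mathrm{Tor}_1^R(M,P)=0$ gives $\ker F(\zeta)=\mathrm{im}(F(\iota))$, so $C(\ker T(\zeta))=\cok\big(F(\ker T(\zeta))\to\ker T(\zeta)\big)=\ker\zeta\oplus\big(\ker F(\zeta)/\mathrm{im}(F(\iota))\big)=\ker\zeta$. Since $Q$ is the projective cover of $\ker\zeta$ by minimality of the given presentation, Lemma~\ref{projective cover} shows $T(Q)$ is the projective cover of $\ker T(\zeta)$; and $T(\xi)\colon T(Q)\to\ker T(\zeta)$, which is surjective by the exactness above, is then a projective cover by the same source-equals-cover argument. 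Together these four points establish that the $T$-image sequence is a minimal projective presentation of $T(X)$.
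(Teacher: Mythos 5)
Your proposal is correct, but it cannot be compared with a proof in the paper, because the paper offers none: the lemma is quoted verbatim from Marmaridis \cite[Proposition 2.8]{M}, so your argument is a self-contained substitute for that citation. The structure is sound: exactness from right exactness of $T$, projectivity of $T(P)$ and $T(Q)$ from the characterization in \cite[Corollary 1.6]{FGR} together with $CT=\mathrm{Id}$, minimality of $T(\zeta)$ from Lemma \ref{projective cover}, and -- the real content -- the computation $C(\ker T(\zeta))=\ker\zeta$, which lets you apply Lemma \ref{projective cover} a second time to conclude that $T(Q)\to\ker T(\zeta)$ is a projective cover. That computation is the step a careless reader would botch, and you got it right: the structure map of $\ker T(\zeta)=\ker\zeta\oplus\ker F(\zeta)$ has image $0\oplus\mathrm{im}\,F(\iota)$, and $\mathrm{im}\,F(\iota)=\ker F(\zeta)$. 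Two small blemishes. First, your appeal to $\mathrm{Tor}_1^R(M,P)=0$ is both unnecessary and misplaced: the equality $\ker F(\zeta)=\mathrm{im}\,F(\iota)$ is exactness of $F(\ker\zeta)\to F(P)\to F(X)\to 0$ at the middle term, which is plain right exactness of $M\otimes_R-$ applied to $\ker\zeta\to P\to X\to 0$; the Tor term sitting to the left in the long exact sequence would be $\mathrm{Tor}_1^R(M,X)$, not $\mathrm{Tor}_1^R(M,P)$, and its vanishing is irrelevant. Second, in the ``source-equals-cover'' argument you should lift $T(\zeta)$ through the abstract projective cover $q\colon P_0\to T(X)$ (possible since $T(P)$ is projective), use minimality of $q$ to see the lift $g$ with $qg=T(\zeta)$ is surjective, and only then compare lengths to get that $g$ is an isomorphism; as written (``any projective cover of $T(X)$ lifts through $T(\zeta)$'') the factorization goes the wrong way, and a length count alone does not close that version. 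Both are easily repaired and do not affect the validity of the overall argument.
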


\section{Support $\tau$-tilting modules}
In this section, we study the support $\tau$-tilting modules over trivial extensions of algebras in terms of adjoint functors. As a result, we can give two construction methods for support $\tau$-tilting modules over lower triangular matrix algebras.

In what follows, we assume that $A$ is a finite dimensional algebra and all modules are finitely generated. Denote by $\Lambda=A\ltimes M$ the trivial extension of $A$ by a bimodule $M$. Let the functors $F, T, C, U, Z$ be as in Section 2. For a left $A$-module $N$, denote by $|N|$ the number of non-isomorphic indecomposable direct summands of $N$.

We first recall the definition of support $\tau$-tilting modules in \cite{AIR}.

\begin{definition}\label{2.1} Let $N$ be a left $A$-module.

(1) $N$ is called {\it $\tau$-rigid} if $\mathrm{Hom}_{A}(N,\tau N)=0$.

(2) $N$ is called {\it $\tau$-tilting} if $N$ is $\tau$-rigid and $|N|=|A|$.

(3) $N$ is called {\it support $\tau$-tilting} if $N$ is $\tau$-tilting over the algebra $A/(e)$, where $e$ is an idempotent of $A$.

\end{definition}

We also need the following characterization of $\tau$-rigid modules.
\begin{lemma}\label{lem:tau-rigid module}\cite[Proposition 2.4 (3)]{AIR} Let $X$ be a left $A$-module and let $P_1\xrto{f}P_0\to X\to 0$ be a minimal projective presentation. Then $X$ is a $\tau$-rigid module if and only if $\Hom_A(f,X)$ is surjective. 	
\end{lemma}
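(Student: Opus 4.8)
The plan is to reduce the statement to a functorial description of $\Hom_A(X,\tau X)$ extracted directly from the given minimal projective presentation. Write $D=\Hom_K(-,K)$ for the standard $K$-duality and $\nu=D\Hom_A(-,A)$ for the Nakayama functor. Recall that, by the very definition of the Auslander--Reiten translate, $\tau X$ is computed from the minimal presentation $P_1\xrto{f}P_0\to X\to 0$ by applying $\nu$: one obtains the exact sequence $0\to \tau X\to \nu P_1\xrto{\nu f}\nu P_0$, so that $\tau X=\ker(\nu f)$ (equivalently $\tau X=D\cok(f^{*})$ with $(-)^{*}=\Hom_A(-,A)$).

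The key technical ingredient I would establish first is the natural isomorphism $\Hom_A(Y,\nu P)\cong D\Hom_A(P,Y)$, valid for every $A$-module $Y$ and every finitely generated projective $A$-module $P$. This follows from the chain $\Hom_A(Y,\nu P)=\Hom_A\bigl(Y,D\Hom_A(P,A)\bigr)\cong D\bigl(\Hom_A(P,A)\otimes_A Y\bigr)\cong D\Hom_A(P,Y)$, where the middle step is the Hom--tensor duality adjunction and the last step uses projectivity of $P$ to identify $\Hom_A(P,A)\otimes_A Y$ with $\Hom_A(P,Y)$. All these isomorphisms are natural in both variables.

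Next I would apply the left exact functor $\Hom_A(Y,-)$ to the exact sequence $0\to \tau X\to \nu P_1\xrto{\nu f}\nu P_0$ and substitute the isomorphism above. Since the identification is natural in the projective variable, the induced map $\Hom_A(Y,\nu f)$ corresponds to $D\Hom_A(f,Y)$, and I obtain an exact sequence $0\to \Hom_A(Y,\tau X)\to D\Hom_A(P_1,Y)\xrto{D\Hom_A(f,Y)}D\Hom_A(P_0,Y)$. Hence $\Hom_A(Y,\tau X)\cong \ker\bigl(D\Hom_A(f,Y)\bigr)=D\bigl(\cok\Hom_A(f,Y)\bigr)$, the last equality because $D$ is exact and contravariant. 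Specializing to $Y=X$ gives $\Hom_A(X,\tau X)\cong D\bigl(\cok\Hom_A(f,X)\bigr)$, so $\Hom_A(X,\tau X)=0$ if and only if $\cok\Hom_A(f,X)=0$, that is, if and only if $\Hom_A(f,X)$ is surjective, which is exactly the claim.

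The main point to be careful about is the role of \emph{minimality}: the identification $\tau X=\ker(\nu f)$ (with no stray projective--injective summands) relies on $P_1\xrto{f}P_0\to X\to 0$ being the minimal projective presentation; for a non-minimal presentation the functor $\nu$ would introduce injective noise and this identification could fail. Apart from that, the only genuine verification is the naturality of $\Hom_A(Y,\nu P)\cong D\Hom_A(P,Y)$ in the projective variable, so that it is compatible with $f$ and sends $\Hom_A(Y,\nu f)$ to $D\Hom_A(f,Y)$; this is routine once the two displayed adjunctions are in place.
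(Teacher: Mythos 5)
Your argument is correct, but note that the paper itself does not prove this lemma at all: it is quoted verbatim from Adachi--Iyama--Reiten, cited as \cite[Proposition 2.4 (3)]{AIR}, so there is no in-paper proof to diverge from. What you have written is essentially the standard proof from that source: the identification $\tau X=\ker(\nu f)$ for a \emph{minimal} presentation, the natural isomorphism $\Hom_A(Y,\nu P)\cong D\Hom_A(P,Y)$ for $P$ finitely generated projective, and left exactness of $\Hom_A(Y,-)$ together give $\Hom_A(Y,\tau X)\cong D\bigl(\cok\Hom_A(f,Y)\bigr)$, and setting $Y=X$ yields the claim; your closing remark correctly isolates where minimality is used (a non-minimal presentation would replace $\ker(\nu f)$ by $\tau X$ plus injective summands, breaking the ``only if'' direction).
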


We begin with a necessary condition on $\tau$-rigid $\Lambda$-modules.
\begin{proposition} \label{prop:necessary condition} Let $(X,\alpha)$ be a left $\Lambda$-module. If $(X,\alpha)$ is a $\tau$-rigid module, then $\cok \alpha$ is a $\tau$-rigid $A$-module.
\end{proposition}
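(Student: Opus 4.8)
The plan is to transport the minimal projective presentation of $(X,\alpha)$ across the adjoint pair $(C,Z)$ and to exploit that the unit $\eta\colon\mathrm{Id}\to ZC$ is an epimorphism. First I would record the shape of a minimal projective presentation of $(X,\alpha)$ in $\Lambda\mbox{-}\mathrm{mod}$. By the description of the projective $\Lambda$-modules recalled above (every projective has the form $T(P)$ with $P$ a projective $A$-module) and by Lemma \ref{projective cover}, the projective cover of $(X,\alpha)$ is $T(P_0)$, where $P_0$ is the projective cover of $C(X,\alpha)=\cok\alpha$. Continuing once on the first syzygy, a minimal projective presentation therefore has the form
\[
T(P_1)\xrightarrow{\theta}T(P_0)\xrightarrow{\pi}(X,\alpha)\to 0,
\]
with $P_0,P_1$ projective $A$-modules. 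Since $(X,\alpha)$ is $\tau$-rigid, Lemma \ref{lem:tau-rigid module} gives that $\Hom_\Lambda(\theta,(X,\alpha))$ is surjective. Applying the right exact functor $C$ and using $CT=\mathrm{Id}$ (Lemma \ref{lem: adjoint}) yields an exact sequence
\[
P_1\xrightarrow{C(\theta)}P_0\xrightarrow{C(\pi)}\cok\alpha\to 0,
\]
a projective presentation of $\cok\alpha$ in which $P_0$ is the projective cover.

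The heart of the argument is the commutative square below, in which $\eta\colon(X,\alpha)\to ZC(X,\alpha)=Z(\cok\alpha)$ is the unit, the horizontal maps $\theta^{*}$ are induced by $\theta$, and the vertical maps $\eta_{*}$ are post-composition with $\eta$:
\[
\begin{CD}
\Hom_\Lambda(T(P_0),(X,\alpha)) @>{\theta^{*}}>> \Hom_\Lambda(T(P_1),(X,\alpha))\\
@V{\eta_{*}}VV @VV{\eta_{*}}V\\
\Hom_\Lambda(T(P_0),Z(\cok\alpha)) @>{\theta^{*}}>> \Hom_\Lambda(T(P_1),Z(\cok\alpha))
\end{CD}
\]
The top map is surjective by the previous paragraph. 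Since $T(P_1)$ is projective and $\eta$ is a minimal, hence surjective, epimorphism by Lemma \ref{lem: adjoint}, the right vertical map $\eta_{*}$ is surjective. Thus the composite of the top map followed by the right map is surjective, and by commutativity so is the bottom map. Finally, the adjunction $(C,Z)$ together with $CT=\mathrm{Id}$ identifies this bottom map naturally with $\Hom_A(C(\theta),\cok\alpha)\colon\Hom_A(P_0,\cok\alpha)\to\Hom_A(P_1,\cok\alpha)$, which is therefore surjective.

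It remains to pass from the presentation $P_1\xrightarrow{C(\theta)}P_0\to\cok\alpha\to 0$, which need not be minimal, to the conclusion. Because $P_0$ is the projective cover of $\cok\alpha$, this presentation differs from the minimal one only by a projective summand $R$ of $P_1$ on which $C(\theta)$ vanishes; writing $P_1\cong P_1'\oplus R$ and $C(\theta)=(g,0)$ with $g$ the minimal differential, surjectivity of $\Hom_A(C(\theta),\cok\alpha)$ forces $\Hom_A(g,\cok\alpha)$ to be surjective. By Lemma \ref{lem:tau-rigid module} this shows $\cok\alpha$ is $\tau$-rigid. I expect the main point requiring care to be precisely this reconciliation of minimality — checking that applying $C$ preserves the projective-cover property needed to invoke Lemma \ref{lem:tau-rigid module} — together with the naturality of the $(C,Z)$-adjunction isomorphism, which is what converts the surjectivity obtained at the level of $\Lambda\mbox{-}\mathrm{mod}$ into the required surjectivity over $A$.
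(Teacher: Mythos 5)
Your proof is correct, and it shares the paper's skeleton: take the minimal projective presentation $T(P_1)\xrto{\theta}T(P_0)\to(X,\alpha)\to 0$, apply the right exact functor $C$ to get a presentation $P_1\xrto{C(\theta)}P_0\to\cok\alpha\to 0$ whose zeroth term is still a projective cover, split off the superfluous summand $P_1=P_1'\oplus R$ with $C(\theta)=(g,0)$ and $g$ right minimal, and finish with Lemma \ref{lem:tau-rigid module}. Where you genuinely differ is the central step that transports surjectivity of $\Hom_\Lambda(\theta,(X,\alpha))$ to surjectivity of $\Hom_A(C(\theta),\cok\alpha)$. The paper does this with bare hands: using the formula $(*)$ from Section 2 it rewrites $\tau$-rigidity of $(X,\alpha)$ as solvability of $f=gx+\alpha F(g)y$, lifts a given map $P_1'\to\cok\alpha$ through the projection $\pi\colon X\to\cok\alpha$, applies that equation, and composes back with $\pi$, the unwanted term $\pi\alpha F(g)y$ vanishing because $\pi\alpha=0$. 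You instead run a purely formal argument: a commutative square of Hom-groups, surjectivity of $\eta_*$ on the projective module $T(P_1)$ (legitimate, since the unit $\eta\colon\mathrm{Id}\to ZC$ is an epimorphism by Lemma \ref{lem: adjoint}), and naturality of the $(C,Z)$-adjunction combined with $CT=\mathrm{Id}$. These are the same computation in two languages --- your lifting along $\eta_*$ is the paper's lifting of $f$ through $\pi$, and commutativity of your square encodes precisely the identity $\pi(gx+\alpha F(g)y)=\pi gx$ --- but your packaging never touches the matrix description of $\Lambda$-morphisms, and it isolates the only formal inputs actually needed ($C$ right exact, $CT=\mathrm{Id}$, unit an epimorphism), so it would carry over verbatim to any setting admitting such an adjoint pair; the paper's version is more concrete and makes visible exactly how the second component of a $\Lambda$-morphism dies in the cokernel.
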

\begin{proof}
	For $(X,\alpha)$, by \cite[Corollary 1.6]{FGR} we may assume its minimal projective presentation to be of the form
	$$T(P_1)\xrto{\left(\begin{smallmatrix}
			x & 0\\
			y & F(x)
		\end{smallmatrix}\right)}T(P_0)\xrto{\zeta=(q,\alpha F(q))}(X,\alpha)\to 0.$$
	By Lemma \ref{lem:tau-rigid module}, $(X,\alpha)$ is a $\tau$-rigid module if and only if $\Hom_{\Lambda}(\left(\begin{smallmatrix}
		x & 0\\
		y & F(x)
	\end{smallmatrix}\right), (X,\alpha))$ is surjective if and only if for any $f\in \Hom_A(P_1,X)$ there is $g\in \Hom_A(P_0,X)$ such that $f=gx+\alpha F(g)y$ by the equality $(*)$ in Section 2.
	
	Since the functor $C$ is right exact, it follows that $P_1\xrto{x}P_0\xrto{C(\zeta)} \cok \alpha\to 0$ is a projective presentation of $\cok \alpha$. Moreover $p:=C(\zeta)$ is right minimal by Lemma \ref{projective cover}. We write $x=(r,0)\colon P_1=P_1'\oplus P_1''\to P_0$, where $r$ is right minimal. Then the sequence
	$$P_1'\xrto{r}P_0\xrto{p}\cok \alpha\to 0$$
	is a minimal projective presentation of $\cok \alpha$.
	
	By Lemma \ref{lem:tau-rigid module}, $\cok\alpha$ is a $\tau$-rigid $A$-module if and only if $\Hom_A(r, \cok \alpha)$ is surjective. If $f$ is a morphism from $P_1'$ to $\cok \alpha$, then there exists a morphism $f'\colon P_1'\to X$ such that $f=\pi f'$ because $P_1'$ is projective and $\pi\colon X\to \cok\alpha$ is surjective. We extend $f'$ to a morphism $(f',0)\colon P_1=P_1'\oplus P_1'\to X$. Since $(X,\alpha)$ is a $\tau$-rigid $\Lambda$-module, by the previous discussion, there is a morphism $g\colon P_0\to X$ such that $(f',0)=gx+\alpha F(g)y.$
So we have $(f,0)=\pi(f',0)=\pi gx+\pi\alpha F(g)y=(\pi gr,0)$. In particular, $f=\pi gr$, which completes the proof.
\end{proof}

Let $R$ and $S$ be finite dimensional algebras and let $_SM_R$ be a finitely generated $S$-$R$-bimodule. Let $\Lambda$ be the lower triangular matrix algebra
$\left(\begin{smallmatrix}
	R&0\\
	M&S
\end{smallmatrix}\right)$. Then $\Lambda$ is isomorphic to the trivial extension $(R\times S)\ltimes M$ of $R\times S$ by $M$.
By Proposition \ref{prop:necessary condition}, we can give a necessary condition for $\tau$-rigid $\Lambda$-modules over lower triangular matrix algebras.
\begin{corollary} \label{cor:necessity condition for lower triangular ring} Let $\Lambda$ be the lower triangular matrix algebra
	$\left(\begin{smallmatrix}
		R&0\\
		M&S
	\end{smallmatrix}\right)$.\\ If ${(0, M\otimes_RX)\xrto{(0, \alpha)}(X, Y)}$ is a $\tau$-rigid $\Lambda$-module, then $X$ is a $\tau$-rigid $R$-module and $\cok \alpha$ is a $\tau$-rigid $S$-module.
	\end{corollary}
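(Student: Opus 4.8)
The plan is to reduce the statement to the already-established Proposition~\ref{prop:necessary condition}, applied to the trivial extension $A\ltimes M$ with $A=R\times S$, and then to unfold the product structure of $A$. First I would record that, under the identification $\Lambda\cong (R\times S)\ltimes M$ from subsection~2.2, the given $\Lambda$-module is precisely a module of the form $(W,\beta)$ over $A\ltimes M$, where $W=(X,Y)$ is the underlying $A=R\times S$-module and the structure map is $\beta=(0,\alpha)\colon F(W)=(0,M\otimes_R X)\to (X,Y)$. Here $F=M\otimes_{R\times S}-$ sends $(X,Y)$ to $(0,M\otimes_R X)$, since the right $R\times S$-action on $M$ factors through $R\times S\to R$ while the left action lands in $S\mbox{-}\mathrm{Mod}$.

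Having set this up, I would invoke Proposition~\ref{prop:necessary condition} directly: because $(W,\beta)$ is a $\tau$-rigid $A\ltimes M$-module by hypothesis, $\cok\beta$ is a $\tau$-rigid $A$-module. The next step is the cokernel computation. Since kernels and cokernels in $R\times S\mbox{-}\mathrm{mod}$ are formed componentwise, and $\beta=(0,\alpha)$ has first component the zero map $0\to X$ and second component $\alpha\colon M\otimes_R X\to Y$, I would compute $\cok\beta=(X,\cok\alpha)$. Thus $(X,\cok\alpha)$ is a $\tau$-rigid $R\times S$-module.

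It then remains to decompose $\tau$-rigidity over the product algebra. I would use that $R\times S\mbox{-}\mathrm{mod}$ splits as the product of $R\mbox{-}\mathrm{mod}$ and $S\mbox{-}\mathrm{mod}$, so the Auslander--Reiten translate acts componentwise, $\tau_{R\times S}(X,\cok\alpha)=(\tau_R X,\tau_S\cok\alpha)$, and $\Hom_{R\times S}$ between objects supported on different factors vanishes. Consequently $\Hom_{R\times S}((X,\cok\alpha),\tau_{R\times S}(X,\cok\alpha))\cong \Hom_R(X,\tau_R X)\oplus\Hom_S(\cok\alpha,\tau_S\cok\alpha)$, which vanishes exactly when both summands vanish. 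Hence $X$ is $\tau$-rigid over $R$ and $\cok\alpha$ is $\tau$-rigid over $S$, as claimed.

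I expect the only genuinely delicate point to be the bookkeeping in the first paragraph: correctly matching the displayed $\Lambda$-module $(0,M\otimes_R X)\xrto{(0,\alpha)}(X,Y)$ with the pair $(W,\beta)$ in the general trivial-extension formalism, so that Proposition~\ref{prop:necessary condition} applies verbatim and so that $\cok\beta$ is read off on the correct ($S$-)component. Once this identification and the componentwise cokernel are pinned down, the product decomposition of $\tau$-rigidity is routine.
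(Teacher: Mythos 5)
Your proposal is correct and follows exactly the route the paper intends: the corollary is stated immediately after Proposition~\ref{prop:necessary condition} precisely because, under the identification $\Lambda\cong(R\times S)\ltimes M$, the structure map is $(0,\alpha)$, so its cokernel is $(X,\cok\alpha)$ and both conclusions drop out of the componentwise decomposition of $\tau$-rigidity over the product algebra $R\times S$. The bookkeeping you flag as delicate (matching the displayed module with the trivial-extension formalism and reading the cokernel on the right components) is handled correctly, so nothing is missing.
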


We can now formulate the following results on $\tau$-rigid modules.

\begin{proposition}\label{prop:tau-rigid module of trivial exten} Let $X$ be a left $A$-module. Then
	
	$(1)$ $T(X)$ is a $\tau$-rigid $\Lambda$-module if and only if $X$ is a $\tau$-rigid $A$-module and $\Hom_A(M\otimes_AX, \tau X)=0$.	
	
	$(2)$ $Z(X)$ is a $\tau$-rigid $\Lambda$-module if and only if $X$ is a $\tau$-rigid $A$-module and $\Hom_A(Q,X)=0$ where $Q$ is a projective cover of $M\otimes_AX$.
\end{proposition}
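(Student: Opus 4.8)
The plan is to use the $\tau$-rigidity criterion of Lemma~\ref{lem:tau-rigid module} together with the explicit description of minimal projective presentations under $T$ (Lemma~\ref{lem:minimal proj presentation}) and the adjunction formula $(*)$ from Section~2. Fix a minimal projective presentation $P_1 \xrightarrow{f} P_0 \to X \to 0$ of $X$ over $A$.

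For part $(1)$, I would first apply Lemma~\ref{lem:minimal proj presentation} to obtain that $T(P_1) \xrightarrow{T(f)} T(P_0) \to T(X) \to 0$ is a minimal projective presentation of $T(X)$. By Lemma~\ref{lem:tau-rigid module}, $T(X)$ is $\tau$-rigid if and only if $\Hom_\Lambda(T(f), T(X))$ is surjective. The key step is to unwind this $\Hom$-surjectivity using the formula $(*)$, which identifies $\Hom_\Lambda(T(P), (Y,\beta))$ with pairs $(h, \beta F(h))$ for $h \in \Hom_A(P,Y)$; applied to $(Y,\beta) = T(X)$, whose underlying $A$-module is $X \oplus F(X)$, this should split the $\Lambda$-level surjectivity into two conditions. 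I expect one factor to recover exactly the surjectivity of $\Hom_A(f, X)$, i.e.\ the $\tau$-rigidity of $X$ itself, while the remaining factor controls the interaction of $f$ with the second summand $F(X) = M\otimes_A X$. The plan is to show that this residual condition is equivalent to $\Hom_A(M\otimes_A X, \tau X) = 0$. To do so I would invoke the standard identity $\Hom_A(Y, \tau X) \cong D\,\overline{\Hom}_A(X, Y)$ (the Auslander--Reiten formula modulo projectives, or equivalently the description of $\tau$-rigidity via the cokernel of $\Hom_A(f,-)$), which translates the vanishing of $\Hom_A(M\otimes_A X, \tau X)$ into a surjectivity statement about $\Hom_A(f, M\otimes_A X)$ matching the residual factor.

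For part $(2)$, by the Remark following Lemma~\ref{lem: adjoint}, the projective cover of $Z(X)$ coincides with that of $T(X)$, namely $T(P_0)$ where $P_0$ is the projective cover of $X$. I would compute a minimal projective presentation of $Z(X) = (X, 0)$ directly: the top map is $T(f)$ composed with the projection $T(P_0) \to Z(P_0)$, but since $Z(X)$ carries the zero multiplication the presentation acquires an extra projective summand accounting for the cover $Q$ of $M\otimes_A X$. Applying Lemma~\ref{lem:tau-rigid module} and formula $(*)$ again, the surjectivity of $\Hom_\Lambda$ at this presentation should again split into the $\tau$-rigidity of $X$ over $A$ plus a condition forcing every map out of the extra summand to lift; this extra summand's contribution is precisely $\Hom_A(Q, X)$, and its vanishing is the stated condition.

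The main obstacle I anticipate is the bookkeeping in part $(1)$: correctly identifying the minimal (as opposed to merely some) projective presentation of $T(X)$ as a map of $\Lambda$-modules, then carefully separating the $\Lambda$-morphism space via $(*)$ into the two $A$-level components and verifying that the second component's surjectivity is genuinely equivalent to $\Hom_A(M\otimes_A X, \tau X)=0$ rather than a strictly stronger or weaker condition. This hinges on applying the Auslander--Reiten duality in the right variable and confirming that minimality is preserved, so that no spurious summands inflate the relevant $\Hom$-spaces.
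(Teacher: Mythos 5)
Your proposal is correct and follows essentially the same path as the paper's proof: both reduce $\tau$-rigidity to surjectivity of $\Hom$ at a minimal projective presentation (Lemma \ref{lem:tau-rigid module}), use Lemma \ref{lem:minimal proj presentation} for $T(X)$ and the extra summand $T(Q)$ (lifting the cover $Q\to F(X)$ through $F(P_0)$) for $Z(X)$, split the resulting surjectivity condition via formula $(*)$ (which is just the explicit form of the adjunctions $(T,U)$ and $(C,Z)$ that the paper invokes), and identify the residual condition in $(1)$ with $\Hom_A(M\otimes_AX,\tau X)=0$ by the cokernel criterion of AIR, Proposition 2.4(2). One minor slip: the duality you quote should read $\overline{\Hom}_A(Y,\tau X)\cong D\,\underline{\Hom}_A(X,Y)$, or in the form actually needed, $\Hom_A(Y,\tau X)\cong D\,\cok\Hom_A(f,Y)$, but since you also invoke the cokernel formulation this does not affect the argument.
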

\begin{proof} (1)	Let $P_1\xrto{f}P_0\to X\to 0$ be a minimal projective presentation of $X$. Then $T(Q)\xrto{T(f)}T(P)\to T(X)\to 0$ is a minimal projective presentation of $T(X)$ in $\Lambda\mbox{-}\mathrm{mod}$ by Lemma \ref{lem:minimal proj presentation}. Therefore $T(X)$ is a $\tau$-rigid module if and only if $\Hom_{\Lambda}(T(f),T(X))$ is surjective. Notice that $(T,U)$ is an adjoint pair, this is equivalent to that $\Hom_A(f,X\oplus M\otimes X)$ is surjective. Since  $\Hom_A(f,X\oplus M\otimes_AX)=\Hom_A(f,X)\oplus \Hom_A(f,M\otimes X)$, it follows that $T(X)$ is a $\tau$-rigid $\Lambda$-module if and only if $X$ is a $\tau$-rigid $A$-module and $\Hom_A(M\otimes_AX, \tau X)=0$ by Lemma \ref{lem:tau-rigid module} and \cite[Proposition 2.4 (2)]{AIR}.

(2) Let $P_1\xrto{f}P_0\xrto{p} X\to 0$ be a minimal projective presentation of $X$. Suppose $q\colon Q\to F(X)$ is a projective cover. Since $F$ is right exact, the morphism $F(p)$ is an epimorphism. Therefore there is a morphism $r\colon Q\to F(P_0)$ such that $F(p) r=q$.

Since $CZ(X)=X$ and $p\colon P_0\to X$ is a projective cover, we know that $T(P_0)\xrto{(p,0)}Z(X)$ is a projective cover. Let $u\colon \ker p\to P_0$ be the kernel of $p$ and $v\colon P_1\to \ker p$ the projective cover such that $f=uv$. Notice that $$\left(\begin{smallmatrix}
	u&0\\
	0&1
\end{smallmatrix}\right)\colon (\ker p\oplus F(P_0), \left(\begin{smallmatrix}
		0&0\\
		F(u)&0
	\end{smallmatrix}\right))\to T(P_0)$$ is the kernel of $(p,0)$. Since $\left(\begin{smallmatrix}
v&0\\
0&q
\end{smallmatrix}\right)\colon P_1\oplus Q\to \ker p\oplus F(X)=C\left(\begin{smallmatrix}
0&0\\
F(u)&0
\end{smallmatrix}\right)$ is a projective cover, we know that
$$\left(\begin{smallmatrix}
	v&0&0&0\\
	0&r&F(f)&0
\end{smallmatrix}\right)\colon T(P_1\oplus Q)\to ( \ker p\oplus F(P_0), \left(\begin{smallmatrix}
0&0\\
F(u)&0
\end{smallmatrix}\right))$$
is a projective cover. Thus
$$T(P_1\oplus Q)\xrto{\left(\begin{smallmatrix}
		f&0&0&0\\
		0&r&F(f)&0
	\end{smallmatrix}\right)}T(P_0)\xrto{(p,0)}Z(X)\to 0$$
is a minimal projective presentation. Therefore $Z(X)$ is a $\tau$-rigid module if and only if $\Hom_{\Lambda}(\left(\begin{smallmatrix}
	f&0&0&0\\
	0&r&F(f)&0
\end{smallmatrix}\right),Z(X))$ is surjective. Notice that $(C,Z)$ is an adjoint pair, this is equivalent to that $\Hom_A((f,0), X)$ is surjective. Since $\Hom_A((f,0), X)=\left(\begin{smallmatrix}
\Hom_A(f,X)\\
0
\end{smallmatrix}\right)$, it follows that $Z(X)$ is a $\tau$-rigid $\Lambda$-module if and only if $X$ is a $\tau$-rigid $A$-module and $\Hom_A(Q,X)=0$ by Lemma \ref{lem:tau-rigid module}.
\end{proof}

Using Proposition \ref{prop:tau-rigid module of trivial exten}, we can get the following result.
\begin{corollary} \label{cor:T(X),Z(X) are tau-rigid}Let $\Lambda$ be the lower triangular matrix algebra
	$\left(\begin{smallmatrix}
		R&0\\
		M&S
	\end{smallmatrix}\right)$. Then
	
	 $(1)$ \cite[Proposition 4.1]{GH} $(0, M\otimes_RX)\xrto{(0,\left(\begin{smallmatrix}
			0\\
			1
		\end{smallmatrix}\right))}(X,Y\oplus M\otimes_RX)$  is a $\tau$-rigid $\Lambda$-module if and only if $X$ is a $\tau$-rigid $R$-module, $Y$ is a $\tau$-rigid $S$-module and $\Hom_S(M\otimes_RX,\tau Y)=0$.
	
	$(2)$ $(0, M\otimes_R X)\xrto{(0,0)}(X,Y)$ is a $\tau$-rigid $\Lambda$-module if and only if $X$ is a $\tau$-rigid $R$-module, $Y$ is a $\tau$-rigid $S$-module and $\Hom_S(Q,Y)=0$, where $Q$ is a projective cover of $M\otimes_RX$.
\end{corollary}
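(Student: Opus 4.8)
The plan is to specialize Proposition \ref{prop:tau-rigid module of trivial exten} to the algebra $A=R\times S$, via the isomorphism $\Lambda\cong (R\times S)\ltimes M$ recalled in Subsection 2.2; throughout I write a left $A$-module as a pair $(X,Y)$ with $X\in R\mbox{-}\mathrm{mod}$ and $Y\in S\mbox{-}\mathrm{mod}$, and set $N=(X,Y)$.

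First I would pin down the functor $F=M\otimes_A-$ on such pairs. Since the right $A$-action on $M$ factors through $A\to R$ and the left action through $A\to S$, one gets $F(X,Y)=(0,M\otimes_R X)$, concentrated in the $S$-component. The decisive observation is then $F^2(N)=F(0,M\otimes_R X)=(0,0)$, because the $R$-component of $F(N)$ vanishes; equivalently $M\otimes_A M=0$. With $F^2(N)=0$ the definition of $T$ collapses: $T(N)$ becomes the map $\left(\begin{smallmatrix}0\\1\end{smallmatrix}\right)\colon F(N)\to N\oplus F(N)$, which as a $\Lambda$-module is exactly $(0,M\otimes_R X)\xrto{(0,\left(\begin{smallmatrix}0\\1\end{smallmatrix}\right))}(X,Y\oplus M\otimes_R X)$. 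Likewise $Z(N)=F(N)\xrto{0}N$ is precisely $(0,M\otimes_R X)\xrto{(0,0)}(X,Y)$. Thus the two modules in the statement are identified with $T(N)$ and $Z(N)$, and it remains only to translate the hypotheses of Proposition \ref{prop:tau-rigid module of trivial exten}.

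Next I would pass everything through the product decomposition $A\mbox{-}\mathrm{mod}=(R\mbox{-}\mathrm{mod})\times(S\mbox{-}\mathrm{mod})$, under which $\Hom$, $\tau$, projective covers and $\tau$-rigidity all split coordinatewise. Hence $(X,Y)$ is $\tau$-rigid over $A$ if and only if $X$ is $\tau$-rigid over $R$ and $Y$ is $\tau$-rigid over $S$, and $\tau(X,Y)=(\tau_R X,\tau_S Y)$. For part (1), the extra condition $\Hom_A(M\otimes_A N,\tau N)=0$ of Proposition \ref{prop:tau-rigid module of trivial exten}(1) reads $\Hom_A((0,M\otimes_R X),(\tau_R X,\tau_S Y))=\Hom_S(M\otimes_R X,\tau_S Y)$, so it is exactly $\Hom_S(M\otimes_R X,\tau Y)=0$, giving (1) (and recovering \cite[Proposition 4.1]{GH}). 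For part (2), the projective cover of $M\otimes_A N=(0,M\otimes_R X)$ is $(0,Q)$ with $Q$ the projective cover of $M\otimes_R X$ in $S\mbox{-}\mathrm{mod}$, and then $\Hom_A((0,Q),(X,Y))=\Hom_S(Q,Y)$, so the condition of Proposition \ref{prop:tau-rigid module of trivial exten}(2) is $\Hom_S(Q,Y)=0$, giving (2).

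Once the identifications are in place the argument is pure bookkeeping, so there is no genuinely hard step; the one place to be careful is the computation of $T(N)$, where one must check $F^2(N)=0$ so that the general formula for $T$---which a priori carries the summand $F^2(N)$ and a nontrivial $2\times2$ matrix---reduces to the stated module. This vanishing is automatic here since $M\otimes_A M=0$ for a lower triangular matrix ring, and it is exactly what makes the specialization clean.
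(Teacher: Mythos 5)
Your proposal is correct and follows exactly the route the paper intends: the paper states this corollary with no written proof beyond the remark that it follows from Proposition \ref{prop:tau-rigid module of trivial exten} applied to $A=R\times S$ via the identification $\Lambda\cong(R\times S)\ltimes M$ of Subsection 2.2, and your computation (in particular the key vanishing $M\otimes_A M=0$, which collapses $T(N)$ and $Z(N)$ to the two modules in the statement, and the coordinatewise splitting of $\Hom$, $\tau$, and projective covers over $R\times S$) supplies precisely the bookkeeping the authors left implicit.
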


In the following we focus on the support $\tau$-tilting modules over trivial extensions of algebras.
We need the following lemma on the number of non-isomorphic indecomposable direct summands of modules.

\begin{lemma}\label{direct summands}  Let $\Lambda$ be the trivial extension of $A$ by $M$ and $X$ a left $A$-module.

$(1)$ The number of indecomposable direct summands of $X$ is equal to that of $T(X)$, that is, $|T(X)|=|X|$.

$(2)$ The number of indecomposable direct summands of $X$ is equal to that of $Z(X)$, that is, $|Z(X)|=|X|$.

$(3)$ $\Lambda\simeq T(A)$ as a $\Lambda$-module, and hence $|A|=|\Lambda|$ holds.
\end{lemma}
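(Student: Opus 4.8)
```latex
\textbf{Proof plan for Lemma \ref{direct summands}.}
The plan is to reduce every assertion to the corresponding statement over $A$, using the functorial properties already recorded in Lemma \ref{lem: adjoint} and Lemma \ref{projective cover}. The key structural fact I will exploit throughout is that $T$ and $Z$ behave well with respect to projective covers: by the Remark following Lemma \ref{lem: adjoint}, the projective covers of $T(N)$ and $Z(N)$ coincide, and by Lemma \ref{projective cover} the projective cover of any $\Lambda$-module $(Y,\beta)$ is $T(P)$, where $P$ is the projective cover of $C(\beta)$.

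For part $(1)$, I would first observe that since $T$ is additive (this is immediate from its definition on morphisms via $T(f)=\left(\begin{smallmatrix} f&0\\0&F(f)\end{smallmatrix}\right)$), a decomposition $X\cong\bigoplus_i X_i$ gives $T(X)\cong\bigoplus_i T(X_i)$; so it suffices to show that $T$ preserves indecomposability and reflects isomorphisms of indecomposable summands. The cleanest route is through the identity $CT=\mathrm{Id}$ of Lemma \ref{lem: adjoint}: if $T(X)\cong U\oplus V$ is a nontrivial decomposition, applying the additive functor $C$ would split $X=CT(X)$ nontrivially unless one factor lies in $\ker C$; to rule out the degenerate case I would note that a nonzero direct summand of $T(X)$ cannot be killed by $C$, since $T(X)$ is (up to projectives) detected by $C$. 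Thus $T$ sets up a bijection between indecomposable summands of $X$ and of $T(X)$, giving $|T(X)|=|X|$. An alternative and perhaps more transparent argument uses that $T$ is fully faithful (it is the left adjoint $U$ that satisfies $U T = \mathrm{Id}$, and one checks $\Hom_{\Lambda}(T(X),T(Y))\cong\Hom_A(X,Y)$ via the adjunction $(T,U)$ together with $UT=\mathrm{Id}$); a fully faithful additive functor preserves and reflects indecomposability and isomorphism classes, which immediately yields the count.

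For part $(2)$, the same strategy applies with $Z$ in place of $T$: I would use $UZ=\mathrm{Id}$ from Lemma \ref{lem: adjoint} to show that $Z$ is fully faithful and additive, hence preserves the number of non-isomorphic indecomposable summands, giving $|Z(X)|=|X|$. Here the relation $UZ=\mathrm{Id}$ with $U$ exact is the analogue of $CT=\mathrm{Id}$ used above.

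For part $(3)$, I would apply $(1)$ to the regular module $X=A$: since $T$ is right adjoint to the exact functor $U$ and $U$ sends the regular $\Lambda$-module to the regular $A$-module, one expects $T$ to carry projectives to projectives. Concretely, $\Lambda=A\ltimes M$ as a left module over itself is the free module of rank one, and unwinding the definition of $T$ shows $T(A)$ has underlying morphism $F(A)\oplus F^2(A)\xrightarrow{\left(\begin{smallmatrix}0&0\\1&0\end{smallmatrix}\right)}A\oplus F(A)$, which I would identify with the regular representation of $\Lambda$; indeed $C(T(A))=A$ is projective and $T(A)\cong T(C(T(A)))$, so by \cite[Corollary 1.6]{FGR} $T(A)$ is projective, and a rank/dimension count (or the universal property making $T(A)$ a progenerator) forces $T(A)\cong\Lambda$. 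Then $|A|=|T(A)|=|\Lambda|$ follows from $(1)$. The main obstacle I anticipate is the bookkeeping in $(1)$: one must rule out the possibility that $T$ creates a spurious indecomposable summand lying in the kernel of $C$, and the smoothest way around this is to establish full faithfulness of $T$ once and for all from the adjunction, after which all three counts become formal.
```
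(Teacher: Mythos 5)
Your proposal has a genuine gap: the route you designate as the \emph{smoothest} one (route B) rests on a false claim. The functor $T$ is \emph{not} fully faithful, and $UT\neq\mathrm{Id}$: the identities recorded in Lemma \ref{lem: adjoint} are $CT=\mathrm{Id}$ and $UZ=\mathrm{Id}$, whereas $UT(N)=N\oplus F(N)$. Correspondingly, the adjunction $(T,U)$ (equivalently the equality $(*)$ in Section 2) gives
$\Hom_{\Lambda}(T(X),T(Y))\cong\Hom_A(X,UT(Y))=\Hom_A(X,Y)\oplus\Hom_A(X,M\otimes_AY)$,
which is strictly larger than $\Hom_A(X,Y)$ whenever $\Hom_A(X,M\otimes_AY)\neq 0$. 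For a concrete failure take $A=K$ and $M=K$, so $\Lambda=K[x]/(x^2)$ and $T(K)={}_\Lambda\Lambda$: then $\Hom_\Lambda(T(K),T(K))$ is two-dimensional while $\Hom_K(K,K)$ is one-dimensional. Since you propose to ``establish full faithfulness of $T$ once and for all,'' after which ``all three counts become formal,'' this is not a repairable slip but the collapse of your preferred argument. The same fallacious inference appears in your part (2): $UZ=\mathrm{Id}$ alone does not yield fullness of $Z$ (the identical inference from $CT=\mathrm{Id}$ would ``prove'' $T$ full). The conclusion for $Z$ happens to be true, but one must use the \emph{other} adjunction: $\Hom_\Lambda(Z(X),Z(Y))\cong\Hom_A(CZ(X),Y)=\Hom_A(X,Y)$ since $CZ=\mathrm{Id}$, or simply observe directly that both structure maps are zero.

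What remains is your route A, which is essentially the paper's proof: use $CT=\mathrm{Id}$ (resp.\ $UZ=\mathrm{Id}$) to see that $T$ (resp.\ $Z$) is additive, reflects isomorphisms, and carries a decomposition of $X$ to one of $T(X)$; the only nontrivial point is whether some $T(X_i)$ could decompose further, i.e.\ whether a nonzero direct summand of $T(X_i)$ can be killed by $C$. You correctly flag exactly this point, but your justification (``$T(X)$ is, up to projectives, detected by $C$'') is not an argument. The missing step is short and holds for \emph{every} $\Lambda$-module: if $V=(W,\beta)$ satisfies $C(V)=\cok\beta=0$, then $\beta$ is surjective, hence $F(\beta)$ is surjective because $F=M\otimes_A-$ is right exact, hence $\beta F(\beta)$ is surjective; but $\beta F(\beta)=0$ by the definition of a $\Lambda$-module, so $W=0$. (Equivalently: the unit $\mathrm{Id}\to ZC$ is a minimal epimorphism by Lemma \ref{lem: adjoint}, and a minimal epimorphism onto $0$ forces the source to be $0$.) With this inserted, route A goes through; note that no fullness is needed, only the object-level correspondence (the paper phrases this as an equivalence $\mathrm{add}X\to\mathrm{add}T(X)$, which is itself slightly too strong, but only the bijection on indecomposable summands matters for the count). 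Finally, in (3) two of your side remarks are wrong --- $T$ is the \emph{left} (not right) adjoint of $U$, and $U({}_\Lambda\Lambda)=A\oplus M\neq A$ --- and a dimension count cannot by itself force a projective module of dimension $\dim_K\Lambda$ to be free. But your direct identification is the correct (and the paper's implicit) argument: unwinding the definition gives $T(A)=\bigl(A\oplus F(A),\left(\begin{smallmatrix}0&0\\1&0\end{smallmatrix}\right)\bigr)$ with $F(A)=M$, which is precisely ${}_\Lambda\Lambda$ with its multiplication action, and then part (1) yields $|\Lambda|=|T(A)|=|A|$.
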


\begin{proof} (1) Since $CT=\mathrm{Id}$, it follows that $T$ induces an equivalence $T:\mathrm{add}X\rightarrow \mathrm{add}T(X)$ with a quasi-inverse given by $C:\mathrm{add}T(X)\rightarrow \mathrm{add}X$. In particular, this implies the number of indecomposable direct summands of $T(X)$ equals that of $X$.

(2)  Since $UZ=\mathrm{Id}$, it follows that $Z$ induces an equivalence $Z:\mathrm{add}X\rightarrow \mathrm{add}Z(X)$ with a quasi-inverse given by $U:\mathrm{add}Z(X)\rightarrow \mathrm{add}X$. In particular, this implies the number of indecomposable direct summands of $Z(X)$ equals that of $X$.

(3) This is a straight result of (1).
\end{proof}

Recall from \cite{AIR} that a pair $(X,P)\in A\mbox{-}\mathrm{mod}$ is called a {\it  $\tau$-rigid pair} if $X$ is $\tau$-rigid, $P$ is projective and $\Hom_A(P,X)=0$. Moreover, a pair $(X,P)\in A\mbox{-}\mathrm{mod}$ is called a {\it support  $\tau$-tilting pair} if it is a $\tau$-rigid pair and $|P|+|X|=|A|$. Now we are in a position to state the main results in this paper.

\begin{theorem}\label{main theorem} Let $\Lambda$ be the trivial extension of $A$ by a bimodule $M$ and let $(X,P)$ be a pair in $A\mbox{-}\mathrm{mod}$. Then
	
	$(1)$ $(T(X), T(P))$ is a support $\tau$-tilting pair in $\Lambda\mbox{-}\mathrm{mod}$ if and only if $(X,P)$ is a support $\tau$-tilting pair in $A\mbox{-}\mathrm{mod}$, $\Hom_A(P,M\otimes_A X)=0$ and $\Hom_A(M\otimes_AX, \tau X)=0$.	
	
	$(2)$ $(Z(X),T(P))$ is a support $\tau$-tilting pair in $\Lambda\mbox{-}\mathrm{mod}$ if and only if $(X,P)$ is a support $\tau$-tilting pair in $A\mbox{-}\mathrm{mod}$ and $\Hom_A(Q,X)=0$, where $Q$ is a projective cover of $M\otimes_AX$.
\end{theorem}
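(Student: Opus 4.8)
The plan is to prove Theorem~\ref{main theorem} by combining the $\tau$-rigidity results from Proposition~\ref{prop:tau-rigid module of trivial exten} with the counting lemma (Lemma~\ref{direct summands}) and the orthogonality/cardinality conditions in the definition of a support $\tau$-tilting pair. Recall that $(X',P')$ is a support $\tau$-tilting pair in $\Lambda\mbox{-}\mathrm{mod}$ precisely when $X'$ is $\tau$-rigid, $P'$ is projective, $\Hom_\Lambda(P',X')=0$, and $|X'|+|P'|=|\Lambda|$. So in each part I will verify these four conditions separately and show they are jointly equivalent to the stated conditions on $(X,P)$.

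For part $(1)$, I would first handle $\tau$-rigidity: by Proposition~\ref{prop:tau-rigid module of trivial exten}$(1)$, $T(X)$ is $\tau$-rigid if and only if $X$ is $\tau$-rigid and $\Hom_A(M\otimes_AX,\tau X)=0$. Next, the projectivity of $T(P)$: since $P$ is projective, $T(P)$ is projective by the description of projectives in subsection~2.4 (a module $(Y,p)$ is projective iff $C(p)$ is projective and $p\cong T(C(p))$), using $CT=\mathrm{Id}$. For the cardinality condition, Lemma~\ref{direct summands} gives $|T(X)|=|X|$, $|T(P)|=|P|$, and $|\Lambda|=|A|$, so $|T(X)|+|T(P)|=|\Lambda|$ is equivalent to $|X|+|P|=|A|$. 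The remaining condition is the orthogonality $\Hom_\Lambda(T(P),T(X))=0$. Here I would use the adjunction $(T,U)$ to rewrite $\Hom_\Lambda(T(P),T(X))\cong\Hom_A(P,U T(X))$. Since $UT(X)=X\oplus M\otimes_AX$ by the definition of $T$ and $U$ in subsection~2.3, this splits as $\Hom_A(P,X)\oplus\Hom_A(P,M\otimes_AX)$. Thus the vanishing of $\Hom_\Lambda(T(P),T(X))$ is equivalent to $\Hom_A(P,X)=0$ together with $\Hom_A(P,M\otimes_AX)=0$. Assembling these four equivalences yields exactly the statement: $(X,P)$ is a support $\tau$-tilting pair in $A\mbox{-}\mathrm{mod}$ (which packages $X$ $\tau$-rigid, $P$ projective, $\Hom_A(P,X)=0$, $|X|+|P|=|A|$), plus $\Hom_A(P,M\otimes_AX)=0$ and $\Hom_A(M\otimes_AX,\tau X)=0$.

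Part $(2)$ runs along the same four-step template but replaces $T(X)$ by $Z(X)$. The $\tau$-rigidity of $Z(X)$ is governed by Proposition~\ref{prop:tau-rigid module of trivial exten}$(2)$: it is $\tau$-rigid iff $X$ is $\tau$-rigid and $\Hom_A(Q,X)=0$, where $Q$ is a projective cover of $M\otimes_AX$. The projectivity of $T(P)$ and the cardinality condition $|Z(X)|+|T(P)|=|\Lambda|\Leftrightarrow|X|+|P|=|A|$ follow exactly as before, now using $|Z(X)|=|X|$ from Lemma~\ref{direct summands}$(2)$. The one genuinely new computation is the orthogonality $\Hom_\Lambda(T(P),Z(X))=0$. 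For this I would again use the adjunction $(T,U)$ to get $\Hom_\Lambda(T(P),Z(X))\cong\Hom_A(P,UZ(X))$, and since $UZ=\mathrm{Id}$ by Lemma~\ref{lem: adjoint}, this reduces to $\Hom_A(P,X)$. Hence $\Hom_\Lambda(T(P),Z(X))=0$ is equivalent simply to $\Hom_A(P,X)=0$, with no extra term appearing. Collecting the conditions gives precisely that $(X,P)$ is a support $\tau$-tilting pair in $A\mbox{-}\mathrm{mod}$ together with $\Hom_A(Q,X)=0$.

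I expect the main subtlety to lie in the orthogonality computation, specifically in correctly identifying $UT(X)=X\oplus M\otimes_AX$ in part $(1)$: this is where the extra condition $\Hom_A(P,M\otimes_AX)=0$ is produced, and it is exactly the term that distinguishes the two parts of the theorem (in part $(2)$, $UZ=\mathrm{Id}$ kills this term). One should be careful to apply the correct adjunction — $(T,U)$ rather than $(C,Z)$ — on the left argument $T(P)$, and to note that $P$ being projective already forces $T(P)$ to be the relevant projective object so that the projectivity hypotheses align. Everything else is a matter of bookkeeping: stringing together the already-established $\tau$-rigidity criteria, the cardinality identities from Lemma~\ref{direct summands}, and the description of projective $\Lambda$-modules, so the proof should be short once these pieces are assembled in the right order.
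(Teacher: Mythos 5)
Your proposal is correct and follows the same overall architecture as the paper's proof: $\tau$-rigidity via Proposition~\ref{prop:tau-rigid module of trivial exten}, the counting condition via Lemma~\ref{direct summands}, and a Hom computation for the orthogonality condition. The one substantive difference is in how the orthogonality is handled, and here your version is actually tighter than the paper's. You compute $\Hom_\Lambda(T(P),T(X))\cong\Hom_A(P,UT(X))=\Hom_A(P,X)\oplus\Hom_A(P,M\otimes_AX)$ via the adjunction $(T,U)$, which makes it transparent both why the extra hypothesis $\Hom_A(P,M\otimes_AX)=0$ is needed in the ``if'' direction of $(1)$ and why it is recovered in the ``only if'' direction. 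The paper, by contrast, writes in $(1)(\Leftarrow)$ only that $\Hom_A(P,X)=0$ ``implies'' $\Hom_\Lambda(T(P),T(X))=0$ --- which is not literally true without the $M\otimes_AX$ summand vanishing --- and in $(1)(\Rightarrow)$ it never explicitly explains how $\Hom_A(P,M\otimes_AX)=0$ is extracted; your decomposition fills both of these gaps. Similarly, in $(2)$ the paper verifies $\Hom_\Lambda(T(P),Z(X))=0$ by an explicit commutative-diagram chase and by the formula $(*)$, whereas you get the same identification $\Hom_\Lambda(T(P),Z(X))\cong\Hom_A(P,UZ(X))=\Hom_A(P,X)$ in one line from $UZ=\mathrm{Id}$; these are the same computation in different clothing. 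One small point both you and the paper pass over quickly: in the ``only if'' directions one should note that $T(P)$ projective over $\Lambda$ forces $P\cong C(T(P))$ projective over $A$, by the characterization of projectives in subsection~2.4 together with $CT=\mathrm{Id}$; this is immediate, but it is needed for $(X,P)$ to qualify as a $\tau$-rigid pair.
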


\begin{proof}(1) $\Leftarrow$ Since $X$ is $\tau$-rigid and $\Hom_A(M\otimes_AX, \tau X)=0$, by Proposition \ref{prop:tau-rigid module of trivial exten}, one gets $T(X)$ is $\tau$-rigid in $\Lambda\mbox{-}\mathrm{mod}$.

(a) We show $(T(X),T(P))$ is a $\tau$-rigid pair. Since $(X,P)$ is a support $\tau$-tilting pair, one gets $\Hom_A(P,X)=0$ which implies $\Hom_{\Lambda}(T(P),T(X))=0$. The assertion holds.

(b) It remains to show that $|T(P)|+|T(X)|=|\Lambda|$. By Lemma \ref{direct summands} one gets $|T(P)|=|P|$, $|T(X)|=|X|$ and $|A|=|\Lambda|$. Since $(X,P)$ is a support $\tau$-tilting pair, then $|P|+|X|=|A|$ implies that $|T(P)|+|T(X)|=|\Lambda|$.

$\Rightarrow$ Since $T(X)$ is a $\tau$-rigid module, by Proposition \ref{prop:tau-rigid module of trivial exten}, one gets that $X$ is $\tau$-rigid and $\Hom_A(M\otimes_AX, \tau X)=0$. Since $(T(X),T(P))$ is a support $\tau$-tilting pair, then $\Hom_{\Lambda}(T(P),T(X))=0$ implies $\Hom_A(P,X)=0$. By Lemma \ref{direct summands}, one gets $|T(P)|=|P|$, $|T(X)|=|X|$ and $|A|=|\Lambda|$. Then $|T(P)|+|T(X)|=|\Lambda|$ implies that $|P|+|X|=|A|$.

(2) $\Leftarrow$ Since $X$ is a $\tau$-rigid $A$-module and $\Hom_A(Q,X)=0$, where $Q$ is a projective cover of $M\otimes_AX$, one gets $Z(X)$ is $\tau$-rigid by Proposition \ref{prop:tau-rigid module of trivial exten}. We show that $(Z(X),T(P))$ is a support $\tau$-tilting pair.

(a) We show $\Hom_{\Lambda}(T(P),Z(X))=0$. Assume that we have the following commutative diagram:
$$\CD
  F(P)\oplus F^2(P) @>\alpha>> P\oplus F(P) \\
  @V (F(m),F(n)) VV @V (m,n) VV  \\
  F(X) @>0>> X
\endCD$$
where $\alpha=\left(
                \begin{smallmatrix}
                  0 & 0 \\
                  1 & 0 \\
                \end{smallmatrix}
              \right)$.
Then $(m,n)\alpha=0(F(m),F(n))$ implies $n=0$. Since $\Hom_A(P,X)=0$, we get $m=0$ and hence $(m,n)=0$. We are done.

(b) We show that $|Z(X)|+|T(P)|=|\Lambda|$. By Lemma \ref{direct summands}, one gets $|T(P)|=|P|$, $|Z(X)|=|X|$ and $|A|=|\Lambda|$. The the assertion follows from the fact $|X|+|P|=|A|$.

$\Rightarrow$ Since $Z(X)$ is $\tau$-rigid, then by Proposition \ref{prop:tau-rigid module of trivial exten} $X$ is $\tau$-rigid and $\Hom_A(Q,X)=0$, where $Q$ is the projective cover of $M\otimes_A X$. Now it suffices to show that $(X,P)$ is the support $\tau$-tilting pair.

(a) We show that $\Hom_A(P,X)=0$. By the equation $(*)$ in Section $2$, one gets that $$\Hom_\Lambda(T(P),Z(X))=\{(f,0)|\forall f\in\Hom_A(P,X)\}.$$ Since $(Z(X),T(P))$ is a support $\tau$-tilting pair, then $\Hom_\Lambda(T(P),Z(X))=0$ implies $\Hom_A(P,X)=0$.

(b) We show that $|P|+|X|=|A|$. By Lemma \ref{direct summands}, one gets $|T(P)|=|P|$, $|Z(X)|=|X|$ and $|A|=|\Lambda|$. Since $(Z(X),T(P))$ is a support $\tau$-tilting pair, then $|T(P)|+|Z(X)|=|\Lambda|$ implies that $|P|+|X|=|A|$.
\end{proof}

As a corollary, we get the following results on the support $\tau$-tilting modules over lower triangular matrix algebras which extend and generalize \cite[Theorem 4.3]{GH}.

\begin{corollary} Let $\Lambda$ be the lower triangular matrix algebra
	$\left(\begin{smallmatrix}
		R&0\\
		M&S
	\end{smallmatrix}\right)$. Then
	
	 $(1)$ $(0, M\otimes_RX)\xrto{(0,\left(\begin{smallmatrix}
			0\\
			1
		\end{smallmatrix}\right))}(X,Y\oplus M\otimes_RX)$ is a support $\tau$-tilting $\Lambda$-module if and only if $X$ is a support $\tau$-tilting $R$-module, $Y$ is a support $\tau$-tilting $S$-module, $\Hom_S(M\otimes_RX,\tau Y)=0$ and $\Hom_S(P,M\otimes_R X)=0$, where $(Y,P)$ is a support $\tau$-tilting pair.
	
	$(2)$ $(0, M\otimes X)\xrto{(0,0)}(X,Y)$ is a support $\tau$-tilting $\Lambda$-module if and only if $X$ is a support $\tau$-tilting $R$-module, $Y$ is a support $\tau$-tilting $S$-module and $\Hom_S(Q,Y)=0$, where $Q$ is a projective cover of $M\otimes_RX$.

\end{corollary}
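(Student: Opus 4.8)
The plan is to deduce both statements from Theorem~\ref{main theorem} by specializing to $A=R\times S$ and using the isomorphism $\Lambda=\left(\begin{smallmatrix}R&0\\M&S\end{smallmatrix}\right)\cong(R\times S)\ltimes M$ recalled in Subsection~2.2. The first thing to do is to identify the two $\Lambda$-modules appearing on the left-hand sides as $T$ and $Z$ applied to a suitable $A$-module. A module over $A=R\times S$ is a pair $(X,Y)$ with $X\in R\mbox{-}\mathrm{mod}$ and $Y\in S\mbox{-}\mathrm{mod}$, and since the right $R\times S$-action on $M$ factors through $R$ while the left action factors through $S$, one computes $F(X,Y)=M\otimes_{R\times S}(X,Y)=(0,M\otimes_R X)$ and hence $F^2(X,Y)=0$. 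Feeding this into the definitions of $T$ and $Z$ gives exactly
$$T(X,Y)=\left((0,M\otimes_R X)\xrto{(0,\left(\begin{smallmatrix}0\\1\end{smallmatrix}\right))}(X,Y\oplus M\otimes_R X)\right),\qquad Z(X,Y)=\left((0,M\otimes_R X)\xrto{(0,0)}(X,Y)\right),$$
so the modules in (1) and (2) are precisely $T(X,Y)$ and $Z(X,Y)$.

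Next I would use that everything decomposes over the product ring $A=R\times S$. Concretely, the Auslander--Reiten translate is computed componentwise, $\tau_A(X,Y)=(\tau_R X,\tau_S Y)$; the Hom-spaces split as $\Hom_A((X_1,Y_1),(X_2,Y_2))=\Hom_R(X_1,X_2)\oplus\Hom_S(Y_1,Y_2)$; projectivity, $\tau$-rigidity and the support $\tau$-tilting property all hold componentwise; and a projective cover of $(X,Y)$ is the direct sum of projective covers of $X$ over $R$ and of $Y$ over $S$. In particular $(X,Y)$ is a support $\tau$-tilting $A$-module if and only if $X$ is support $\tau$-tilting over $R$ and $Y$ is support $\tau$-tilting over $S$, and a support $\tau$-tilting pair $((X,Y),(P_1,P_2))$ over $A$ corresponds to support $\tau$-tilting pairs $(X,P_1)$ over $R$ and $(Y,P_2)$ over $S$.

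With these two reductions in place, translating the conditions of Theorem~\ref{main theorem} is a direct computation. For (1), using $F(X,Y)=(0,M\otimes_R X)$ and $\tau_A(X,Y)=(\tau_R X,\tau_S Y)$, the condition $\Hom_A(P',F(X,Y))=0$ collapses to $\Hom_S(P,M\otimes_R X)=0$ (with $P=P_2$ the projective part of the support $\tau$-tilting pair $(Y,P)$ over $S$; note that the $R$-component $P_1$ imposes no condition since $F(X,Y)$ has zero $R$-part), and $\Hom_A(F(X,Y),\tau_A(X,Y))=0$ collapses to $\Hom_S(M\otimes_R X,\tau Y)=0$. For (2), the projective cover of $F(X,Y)=(0,M\otimes_R X)$ over $A$ is $(0,Q)$ with $Q$ a projective cover of $M\otimes_R X$ over $S$, whence $\Hom_A(Q',(X,Y))=\Hom_S(Q,Y)$, giving the single condition $\Hom_S(Q,Y)=0$. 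Finally, to pass between the module formulation of the corollary and the pair formulation of the theorem, I would invoke the description of projective $\Lambda$-modules recalled in Subsection~2.4: every projective $\Lambda$-module has the form $T(P')$ for a projective $A$-module $P'$. Hence $T(X,Y)$ (respectively $Z(X,Y)$) is a support $\tau$-tilting module exactly when $(T(X,Y),T(P'))$ (respectively $(Z(X,Y),T(P'))$) is a support $\tau$-tilting pair for some projective $A$-module $P'=(P_1,P)$, and applying Theorem~\ref{main theorem} together with the product decomposition yields the stated equivalences.

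The Hom-space manipulations are routine once the set-up is fixed; the only genuine point requiring care is the bookkeeping of the passage from support $\tau$-tilting modules to support $\tau$-tilting pairs --- checking that the projective complement is forced to be of the form $T(P')$ and that its $R$- and $S$-components are precisely the projective parts of the support $\tau$-tilting pairs for $X$ and $Y$ --- so that the existential quantifier over the projective part in the pair version matches the phrasing ``$X$ is support $\tau$-tilting over $R$ and $(Y,P)$ is a support $\tau$-tilting pair'' in the corollary. I expect this identification, rather than any of the computations, to be the step most worth spelling out.
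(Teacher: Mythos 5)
Your proposal is correct and takes exactly the route the paper intends: the paper states this corollary without proof as an immediate specialization of Theorem~\ref{main theorem} to $A=R\times S$ via the isomorphism $\left(\begin{smallmatrix}R&0\\M&S\end{smallmatrix}\right)\cong(R\times S)\ltimes M$, and your computation of $F(X,Y)=(0,M\otimes_R X)$, $F^2=0$, the componentwise splitting over the product ring, and the identification of projective $\Lambda$-modules as $T(P')$ supplies precisely the details left implicit. Your closing remark about matching the module formulation to the pair formulation (the projective complement being forced to have the form $T(P_1,P)$ with $(X,P_1)$, $(Y,P)$ the component pairs) is indeed the only point needing care, and your treatment of it is sound.
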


\section{Examples}

In this section we give examples to show our main results.
\begin{example}\label{example of trivial extensions} Let $A=KQ$ with the quiver $Q:1\rightarrow 2$ and $M=\mathbb{D}A$. Let $\Lambda$ be the trivial extension of $A$ by $M$. Then

(1) $\Lambda$ is given by the quiver $Q':\xymatrix{1\ar@<.2em>[r]^{a}&2\ar@<.2em>[l]^{a}}$ with $a^3=0$, see \cite{C}.

(2) The indecomposable $\tau$-rigid modules in $A\mbox{-}\mathrm{mod}$ are $P(1),P(2),S(1)$.

(3) $T(P(1)),T(P(2))$ are indecomposable projective $\Lambda$-modules and hence $\tau$-rigid.

(4) $T(S(1))=Z(S(1))$ and $Z(S(2))$ are indecomposable non-projective $\tau$-rigid $\Lambda$-modules.

(5) The non-zero support $\tau$-tilting $A$-modules are: $P(1)\oplus P(2)$, $P(1)\oplus S(1)$, $S(1)$ and $S(2)=P(2)$.

(6) $T(P(1))\oplus T(P(2))$, $T(P(1))\oplus T(S(1))$, $T(S(1))=Z(S(1))$, $Z(S(2))$ are non-zero support $\tau$-tilting $\Lambda$-modules with the form $T(X)$ or $Z(X)$.

(7) There is a support $\tau$-tilting $\Lambda$-module $T(P(2))\oplus Z(S(2))$ which is neither of the form $T(X)$ nor of form $Z(X)$.

\end{example}

Now we give an example on the support $\tau$-tilting modules over a lower triangular matrix algebra.

\begin{example}\label{lower triangular matrix algebra} Let $A=KQ$ with the quiver $Q:1\rightarrow 2$ and $M=A$. Let $\Lambda$ be the lower triangular matrix algebra $\left(
                            \begin{smallmatrix}
                              A & 0 \\
                              A & A \\
                            \end{smallmatrix}
                          \right)$
. Then

(1) $\Lambda$ is the trivial extension of $B=\left(
                            \begin{smallmatrix}
                              A & 0 \\
                              0& A \\
                            \end{smallmatrix}
                          \right)$
by $A$.

(2) The non-zero support $\tau$-tilting $A$-modules are: $A_1=P(1)\oplus P(2)$, $A_2=P(1)\oplus S(1)$, $A_3=S(1)$ and $A_4=S(2)$.

(3) $T((A_i,0))=(0, A_i)\xrto{(0,\left(\begin{smallmatrix}
			0\\
			1
		\end{smallmatrix}\right))}(A_i,0\oplus A_i)$ is not a support $\tau$-tilting $\Lambda$-module for $i=1,2,3,4$;

$T((A_i,A_i))=(0,A_i)\xrto{(0,\left(\begin{smallmatrix}
			0\\
			1
		\end{smallmatrix}\right))}(A_i, A_i\oplus A_i)$ is a support $\tau$-tilting $\Lambda$-module for $i=1,2,3,4$;

$T((A_i,A_1))=(0,A_i)\xrto{(0,\left(\begin{smallmatrix}
			0\\
			1
		\end{smallmatrix}\right))}(A_i, A_1\oplus A_i)$ is a support $\tau$-tilting $\Lambda$-module for $i=2,3,4$;

$T((A_3,A_2))=(0,A_3)\xrto{(0,\left(\begin{smallmatrix}
			0\\
			1
		\end{smallmatrix}\right))}(A_3, A_2\oplus A_3)$ is a support $\tau$-tilting $\Lambda$-module;

$T((A_2,A_3))=(0,A_2)\xrto{(0,\left(\begin{smallmatrix}
			0\\
			1
		\end{smallmatrix}\right))}(A_2, A_3\oplus A_2)$ is not a support $\tau$-tilting $\Lambda$-module;

$T((A_i,A_4))=(0,A_i)\xrto{(0,\left(\begin{smallmatrix}
			0\\
			1
		\end{smallmatrix}\right))}(A_i, A_4\oplus A_i)$ is not a support $\tau$-tilting $\Lambda$-module for $i=1,2,3$;

$Z(A_i,0)=(0,A_i)\xrto{(0,0)}(A_i, 0)$ is a support $\tau$-tilting $\Lambda$-module for $i=1,2,3,4$;

$Z(0,A_i)=(0,0)\xrto{(0,0)}(0, A_i)=T(0,A_i)$ is a support $\tau$-tilting $\Lambda$-module for $i=1,2,3,4$;

$Z(A_4,A_3)=(0,A_4)\xrto{(0,0)}(A_4, A_3)$ is a support $\tau$-tilting $\Lambda$-module;

$Z(A_i,A_4)=(0,A_i)\xrto{(0,0)}(A_i, A_4)$ is a support $\tau$-tilting $\Lambda$-module for $i=2,3$.

\end{example}

 \vskip10pt

\end{document}